\documentclass[12pt,leqno,a4paper]{amsart}
\usepackage[T1]{fontenc}
\usepackage[utf8]{inputenc}
\usepackage{lmodern}
\usepackage{amsmath,amssymb,amsthm}
\usepackage[margin=28mm]{geometry}
\usepackage{microtype}
\usepackage{needspace}
\usepackage[hidelinks]{hyperref}
\usepackage{url}
\allowdisplaybreaks[2]
\numberwithin{equation}{section}
\newtheorem{thm}{Theorem}[section]
\newtheorem{lem}[thm]{Lemma}
\newtheorem{prop}[thm]{Proposition}

\theoremstyle{definition}

\newtheorem{exa}{Example}
\theoremstyle{remark}
\newtheorem{rmk}{Remark}
\newcommand{\R}{\mathbb R}
\newcommand{\C}{\mathbb C}
\newcommand{\N}{\mathbb N}
\newcommand{\Nzero}{\mathbb N_0}
\newcommand{\Q}{\mathbb Q}
\newcommand{\Z}{\mathbb Z}
\newcommand{\D}{\mathcal D}
\newcommand{\dd}{\,\mathrm d}
\newcommand{\ip}[2]{\left(#1,#2\right)}

\newcommand{\doi}[1]{\href{https://doi.org/#1}{\nolinkurl{#1}}}
\newcommand{\arxiv}[1]{\href{https://arxiv.org/abs/#1}{arXiv:\nolinkurl{#1}}}

\title[Source factor and fractional order with unknown initial data]{Uniqueness in determining the temporal source factor and the fractional order with an unknown initial value}
\author[R. Ashurov]{Ravshan Ashurov}
\address{V.~I.~Romanovskiy Institute of Mathematics, Uzbekistan Academy of Sciences, Tashkent 100174, Uzbekistan; School of Engineering, Central Asian University, Tashkent 111221, Uzbekistan}
\email{ashurovr@gmail.com}
\author[M. Yamamoto]{Masahiro Yamamoto}
\address{Graduate School of Mathematical Sciences, The University of Tokyo, Komaba, Meguro, Tokyo 153-8914, Japan; Department of Mathematics, Faculty of Arts and Sciences, Zonguldak B\"ulent Ecevit University, Zonguldak 67100, Turkey}
\email{myama@ms.u-tokyo.ac.jp}
\subjclass[2020]{35R30, 35R11, 34A08}
\keywords{Time-fractional diffusion equation, inverse source problem, fractional order, unknown initial value, Dirichlet boundary condition, weighted observation}
\date{}

\begin{document}
\begin{abstract}
We consider inverse problems for a time-fractional diffusion equation with a second-order elliptic operator and the homogeneous Dirichlet boundary condition in a bounded domain of $\mathbb R^N$. The initial value is unknown and is not assumed to be zero. The main result concerns the simultaneous determination of the fractional order and the time-dependent source factor from one weighted spatial observation. For irrational orders in $(0,1)$ and bounded source factors satisfying an exponential integrability condition, we prove that a nonzero observation given for all positive times uniquely determines both unknowns. We also describe the information about the initial value determined by this observation. A counterexample shows that uniqueness of the source factor may fail at the classical order $\alpha=1$, even when the observation is nonzero. In the second part, we establish two uniqueness results for the fractional order when the elliptic operators, initial values, and source terms in the compared equations may be different. The first result is proved by the Laplace transform, whereas the second uses short-time observations and the asymptotic behavior of the Mittag--Leffler functions.
\end{abstract}
\maketitle

\section{Introduction}\label{sec:introduction}

Let $\Omega\subset\R^N$, $N\geq1$, be a bounded connected domain with a smooth boundary $\partial\Omega$. Consider the second-order elliptic operator
\begin{equation}\label{eq:elliptic-A}
 Av(x)=-\sum_{i,j=1}^N\partial_i\bigl(a_{ij}(x)\partial_jv(x)\bigr)-c(x)v(x),
 \qquad x\in\Omega,
\end{equation}
where $a_{ij}=a_{ji}\in C^1(\overline\Omega)$, $c\in C^1(\overline\Omega)$, $c\leq0$, and
\begin{equation}\label{eq:ellipticity-A}
 \sum_{i,j=1}^N a_{ij}(x)\zeta_i\zeta_j\geq\kappa_A|\zeta|^2,
 \qquad x\in\overline\Omega,\quad \zeta\in\R^N,
 \qquad \kappa_A>0.
\end{equation}
We use the same notation $A$ for the differential expression (\ref{eq:elliptic-A}) with a homogeneous Dirichlet condition, and with
\[
 \D(A)=H^2(\Omega)\cap H_0^1(\Omega).
\]
All coefficients and spatial data in this paper are real-valued. We denote the scalar product and norm in $L^2(\Omega)$ by $(\cdot,\cdot)$ and $\|\cdot\|$, respectively. Thus
\[
 (v,w)=\int_\Omega v(x)w(x)\dd x.
\]
Under the above assumptions, $A$ is a positive self-adjoint operator with a compact inverse; see, for example, \cite{4,6,7}.

Let $0<\alpha<1$. We consider the initial-boundary value problem
\begin{equation}\label{1}
 \begin{cases}
 \partial_t^\alpha\bigl(u(x,t)-a(x)\bigr)+Au(x,t)=\mu(t)f(x),
       &x\in\Omega,\quad t>0,\\
 u(x,t)=0,&x\in\partial\Omega,\quad t>0,
 \end{cases}
\end{equation}
where $a\in H_0^1(\Omega)$ and $f\in L^2(\Omega)$. The fractional derivative is understood in the sense of Yamamoto \cite{7}. In particular, the initial condition is expressed by
\begin{equation}\label{eq:initial-space}
 u-a\in H_\alpha(0,T;L^2(\Omega))\qquad\hbox{for every }T>0.
\end{equation}
The equation is satisfied in $L^2(0,T;L^2(\Omega))$, and
\[
 u\in L^2(0,T;H^2(\Omega)\cap H_0^1(\Omega)).
\]
We recall the definition of $H_\alpha$ and the fractional derivative in Section~\ref{sec:preliminaries}. Throughout the paper, the temporal source factors are essentially bounded on the time intervals under consideration. For such factors, the solution is continuous from $[0,T]$ to $L^2(\Omega)$ and satisfies $u(\cdot,0)=a$; see Theorem \ref{lem2} below.

The spatial factor $f$ is prescribed, while the fractional order $\alpha$, the temporal factor $\mu$, and the initial value $a$ are unknown. Our observation is the weighted spatial average
\begin{equation}\label{eq:observation}
 y(t)=\int_\Omega u(x,t)\psi(x)\dd x,
\end{equation}
where $\psi\in L^2(\Omega)$ is a fixed weight function. We study two inverse problems associated with these data.

Denote the solution of \eqref{1}--\eqref{eq:initial-space} by $u_{\alpha,\mu,a}$. For another order $\beta\in(0,1)$, initial value $b\in H_0^1(\Omega)$, and temporal factor $\nu$, let $u_{\beta,\nu,b}$ denote the solution with the same operator $A$ and the same spatial source factor $f$.

\medskip
\noindent\textbf{Inverse Problem 1.}
Does the identity
\begin{equation}\label{eq:intro-observation}
 \int_\Omega u_{\alpha,\mu,a}(x,t)\psi(x)\dd x
 =\int_\Omega u_{\beta,\nu,b}(x,t)\psi(x)\dd x,
 \qquad t>0,
\end{equation}
imply that $\alpha=\beta$ and $\mu=\nu$?
\medskip

The principal difficulty is that the initial values $a$ and $b$ are unknown and need not coincide. Theorem~\ref{thm1} gives an affirmative answer for irrational orders, under the conditions
\[
 (f,\psi)\ne0,\qquad
 e^{p_0t}\mu,\ e^{p_0t}\nu\in L^1(0,\infty)
 \quad\hbox{for some }p_0>0,
\]
provided that the common observation is not identically zero. No sign condition on $a$, $b$, $f$, or $\psi$ is imposed in this result. Moreover, the observation determines the scalar quantities $(P_na,\psi)$, where $P_n$ is the orthogonal projection onto an eigenspace of $A$. In general, this does not determine the entire initial value. The precise conclusion and a sufficient condition for $a=b$ are given below.

For the second inverse problem, let
\begin{equation}\label{eq:elliptic-B}
 Bv(x)=-\sum_{i,j=1}^N\partial_i\bigl(b_{ij}(x)\partial_jv(x)\bigr)-d(x)v(x),
 \qquad \D(B)=H^2(\Omega)\cap H_0^1(\Omega),
\end{equation}
where $b_{ij}=b_{ji}\in C^1(\overline\Omega)$, $d\in C^1(\overline\Omega)$, $d\leq0$, and
\[
 \sum_{i,j=1}^N b_{ij}(x)\zeta_i\zeta_j\geq\kappa_B|\zeta|^2,
 \qquad x\in\overline\Omega,\quad\zeta\in\R^N,
 \qquad\kappa_B>0.
\]
Consider also the problem
\begin{equation}\label{3}
 \begin{cases}
 \partial_t^\beta\bigl(v(x,t)-b(x)\bigr)+Bv(x,t)=\nu(t)g(x),
       &x\in\Omega,\quad t>0,\\
 v(x,t)=0,&x\in\partial\Omega,\quad t>0,
 \end{cases}
\end{equation}
with $v-b\in H_\beta(0,T;L^2(\Omega))$ for every $T>0$, where $b\in H_0^1(\Omega)$ and $g\in L^2(\Omega)$. To indicate all the data, write the solutions as $u_{A,a,\mu,f,\alpha}$ and $u_{B,b,\nu,g,\beta}$.

\medskip
\noindent\textbf{Inverse Problem 2.}
Does the identity
\[
 \int_\Omega u_{A,a,\mu,f,\alpha}(x,t)\psi(x)\dd x
 =\int_\Omega u_{B,b,\nu,g,\beta}(x,t)\psi(x)\dd x,
 \qquad t>0,
\]
imply that $\alpha=\beta$, even when the operators, initial values, and source terms are not known to coincide?
\medskip

We obtain two results for this problem. The first uses the Laplace transform and nonvanishing weighted averages of $A^{-1}a$ and $B^{-1}b$. The second uses the first nonzero fractional term in the short-time expansion of the observation. In the latter result, observations on an arbitrary interval $(0,T)$ are sufficient, and no assumption on the source factors for large times is needed. The assumptions of these two results are different. Both results allow arbitrary orders in $(0,1)$, including rational orders.

Time-fractional diffusion equations are used to describe diffusion processes with memory; see, for example, \cite{Baleanu,Mainardi,MK,SKB}. The fractional order characterizes the temporal memory, whereas $\mu(t)$ describes the variation of the source intensity. The determination of these quantities from additional information about the solution leads to inverse problems. General background on inverse problems can be found in \cite{Kabanikhin}; surveys and related results for fractional equations are given in \cite{LiYamamoto1,LiYamamoto2,LiYamamoto3,Kirane,Ruzhansky}.

Various observation conditions have been considered for determining fractional orders. Weighted observations with a first-eigenfunction weight were used in \cite{AU2020}, and harmonic weights in a whole-space problem were considered in \cite{PskhuUzmat}. Order determination when other model parameters are unknown is studied in \cite{JK,Ya21,Yamamoto-arxiv}. Inverse problems for sources and initial values using observations after the source has ceased to act are considered in \cite{Loreti,Ya22}.

The work most closely related to the present paper is \cite{AshurovYamamoto}, where simultaneous determination of the order and the temporal source factor is studied for zero initial data, using the behavior of the source near $t=0$. That work also gives a counterexample for nonzero initial data. In the present paper, the initial value is unknown, but the assumptions on the observations and source factors are different: the main result uses data for all positive times and exponential integrability of the source factors. Under these assumptions, the initial contribution and the source contribution can be distinguished. Thus the unknown initial value does not prevent simultaneous uniqueness. The zero-observation case is described in Remark~\ref{rem:zero-observation}.

The remainder of the paper is organized as follows. Section~\ref{sec:preliminaries} contains the required definitions, auxiliary results, the forward theorem of Yamamoto, and the main uniqueness theorems. Section~\ref{sec:simultaneous} is devoted to Inverse Problem~1 and includes a counterexample at the classical order $\alpha=1$. Section~\ref{sec:order} contains the two proofs for Inverse Problem~2. Some concluding remarks are given in Section~\ref{sec:conclusion}.

\section{Preliminaries and main results}\label{sec:preliminaries}

In this section, we first recall the fractional derivative used in \eqref{1} and several properties of the Mittag--Leffler functions. We then state the forward theorem and the main results.

For $0<\alpha<1$ and $w\in L^2(0,T;L^2(\Omega))$, the Riemann--Liouville fractional integral is defined by
\[
 J^\alpha w(t)=\frac1{\Gamma(\alpha)}
       \int_0^t(t-s)^{\alpha-1}w(s)\dd s,
\]
where the integral is understood in the Bochner sense. Following Yamamoto \cite[Section~2]{7}, we set
\begin{equation}\label{eq:inverse-integral}
 \begin{gathered}
 H_\alpha(0,T;L^2(\Omega))=J^\alpha L^2(0,T;L^2(\Omega)),\\
 \partial_t^\alpha=(J^\alpha)^{-1},\qquad
 \D(\partial_t^\alpha)=H_\alpha(0,T;L^2(\Omega)).
 \end{gathered}
\end{equation}
The norm in this space is
\[
 \|w\|_{H_\alpha(0,T;L^2(\Omega))}
   =\|\partial_t^\alpha w\|_{L^2(0,T;L^2(\Omega))}.
\]
This is the restriction of Yamamoto's extended fractional derivative to functions whose derivative belongs to $L^2(0,T;L^2(\Omega))$. No pointwise time derivative of $w$ is required.

The space $H_\alpha$ has the following characterization, with equivalent natural norms; see \cite{4,7}:
\begin{equation}\label{Halpha}
 H_\alpha(0,T;L^2(\Omega))=
 \begin{cases}
 H^\alpha(0,T;L^2(\Omega)),&0<\alpha<\dfrac12,\\[4pt]
 \displaystyle\left\{w\in H^{1/2}(0,T;L^2(\Omega)):
             \int_0^T\frac{\|w(t)\|^2}{t}\dd t<\infty\right\},
          &\alpha=\dfrac12,\\[8pt]
 \left\{w\in H^\alpha(0,T;L^2(\Omega)):w(0)=0\right\},
          &\dfrac12<\alpha<1.
 \end{cases}
\end{equation}
Here $H^\alpha$ is the usual Sobolev--Slobodeckij space; see \cite{1}. For sufficiently smooth $u$ with $u(\cdot,0)=a$, the expression $\partial_t^\alpha(u-a)$ coincides with the classical Caputo derivative of $u$.

For $\gamma>0$ and $\eta\in\C$, the two-parameter Mittag--Leffler function is defined by
\[
 E_{\gamma,\eta}(z)=\sum_{k=0}^{\infty}
       \frac{z^k}{\Gamma(\gamma k+\eta)},\qquad z\in\C.
\]
We write $E_\gamma=E_{\gamma,1}$. Let us recall the properties needed below.
\begin{lem}\label{LaplasMLfunc}
Let $0<\gamma<1$, $\lambda>0$. Then $E_\gamma(-\lambda t)$ is complete monotone and
\begin{equation}\label{E0_1}
    0<E_\gamma(-\lambda t)\leq1,\quad\text{for}\quad t\geq0.
\end{equation}
For $\eta\in\mathbb{R}$, there is a constant $C_{\gamma,\eta}$ such that
\begin{equation}\label{eq:ML-bound}
 |E_{\gamma,\eta}(-t)|\leq\frac{C_{\gamma,\eta}}{1+t},\qquad t\geq0.
\end{equation}

Let
\[
 k_{\gamma,\lambda}(t)
   =t^{\gamma-1}E_{\gamma,\gamma}(-\lambda t^\gamma),\qquad t>0.
\]
Then
$k_{\gamma,\lambda}\geq0$, and
\begin{equation}\label{eq:kernel-mass}
 \int_0^t k_{\gamma,\lambda}(s)\dd s
     =\frac{1-E_\gamma(-\lambda t^\gamma)}{\lambda}
     \leq\frac1\lambda.
\end{equation}
Moreover
\begin{equation}\label{L1}
 k_{\gamma,\lambda}\in L^1(0,\infty),\qquad
 \int_0^\infty k_{\gamma,\lambda}(t)\dd t\leq\frac1\lambda.
\end{equation}

\end{lem}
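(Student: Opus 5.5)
These are standard facts about Mittag--Leffler functions, and I would prove them in the order stated, taking the classical results from the literature as given. For complete monotonicity of $E_\gamma(-\lambda t)$, $0<\gamma<1$, I would invoke Pollard's theorem. It gives the representation
\[
 E_\gamma(-x)=\int_0^\infty e^{-rx}\,\dd F_\gamma(r)
\]
with $F_\gamma$ a probability distribution, equivalently Schneider's result via Bernstein's theorem. Since $x=\lambda t$ is linear in $t$ with $\lambda>0$, complete monotonicity in $t$ follows. The representation also gives $0<E_\gamma(-\lambda t)\le E_\gamma(0)=1$. Strict positivity holds because a nonnegative mixture of exponentials with total mass one cannot vanish at a finite point.

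For the bound \eqref{eq:ML-bound}, I would quote the standard asymptotic estimate (Podlubny, Theorem~1.6, or Kilbas--Srivastava--Trujillo). For $0<\gamma<2$, $\eta\in\R$, and $|\arg z|\ge \mu$ with $\pi\gamma/2<\mu<\min(\pi,\pi\gamma)$, it states that $|E_{\gamma,\eta}(z)|\le C/(1+|z|)$. Applying this on the negative real axis, $\arg z=\pi$, yields the claim. Continuity on compact sets handles small $t$, though the cited estimate already covers it.

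For the kernel, I would first differentiate the series termwise, which is legitimate since the series is entire. This gives
\[
 \frac{\dd}{\dd t}E_\gamma(-\lambda t^\gamma)=-\lambda t^{\gamma-1}E_{\gamma,\gamma}(-\lambda t^\gamma)=-\lambda k_{\gamma,\lambda}(t).
\]
Integrating from $0$ to $t$ and using $E_\gamma(0)=1$ gives the identity in \eqref{eq:kernel-mass}. The integrand is $O(s^{\gamma-1})$ near $0$, so it is integrable there.

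The main point is nonnegativity of $k_{\gamma,\lambda}$. Here I would use complete monotonicity of $g(x)=E_\gamma(-x)$: it gives $g'(x)\le0$. The chain rule applied to $E_\gamma(-\lambda t^\gamma)$ expresses the derivative above as $g'(\lambda t^\gamma)\,\lambda\gamma t^{\gamma-1}$. Comparing the two expressions yields $k_{\gamma,\lambda}(t)=-\gamma\, g'(\lambda t^\gamma)\,t^{\gamma-1}\ge0$. Once $k\ge0$ is known, the bound $\le1/\lambda$ follows from $E_\gamma>0$.

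Finally, \eqref{L1} follows by monotone convergence as $t\to\infty$, since $k\ge0$ and the partial integrals are bounded by $1/\lambda$. In fact the integral equals $1/\lambda$, because $E_\gamma(-\lambda t^\gamma)\to0$ by \eqref{eq:ML-bound}.

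The only step requiring an external deep result is complete monotonicity (Pollard). Everything else is series manipulation or a citation of the standard asymptotic bound.
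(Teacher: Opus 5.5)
Your proposal is correct and follows essentially the same route as the paper. Both take complete monotonicity, \eqref{E0_1} and \eqref{eq:ML-bound} from the literature, obtain \eqref{eq:kernel-mass} by integrating $\frac{d}{dt}E_\gamma(-\lambda t^\gamma)=-\lambda k_{\gamma,\lambda}(t)$, and get \eqref{L1} by monotone convergence; you additionally justify $k_{\gamma,\lambda}\ge0$ explicitly through complete monotonicity and the chain rule, a step the paper's proof leaves to its citation.
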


\begin{proof}
The proof of the complete monotonicity and the estimates (\ref{E0_1}) and (\ref{eq:ML-bound}) can be found, for example, in \cite{5}. The formula (\ref{eq:kernel-mass}) can be found, for example, in \cite{5} and a proof follows directly from the equality
\[
\frac{d}{dt} E_\gamma(-\lambda t^\gamma)=-\lambda k_{\gamma,\lambda}(t),\qquad t>0,
\]
which can be verified, for example, by (1.83) on p. 22 of \cite{5}.
Letting $t\to\infty$ in (\ref{eq:kernel-mass}), and using the monotone convergence theorem, we
obtain (\ref{L1}).
\end{proof}

Let us denote the Laplace transform of a given bounded function $f \in C[0, \infty)$  by 
\[
\widehat{f}(s) =\int_0^\infty e^{-st} f(t) \, dt,\,\, \Re s > 0.
\]
If the functions $f$ and $g$ are of exponential growth type; that is, there exist constants $M_1, M_2 > 0$ and $a_1, a_2 \ge 0$ such that: $$
\vert{}f(t)\vert{} \le M_1 e^{a_1 t}, \quad \vert{}g(t)\vert{} \le M_2 e^{a_2 t} \quad \text{for all } t \ge 0,
$$ 
then the convolution 
\begin{equation*}\label{con}
(f * g)(t) = \int_{0}^{t} f(\tau) g(t - \tau) \, d\tau.
\end{equation*}
is also continuous and of exponential growth, and the formula 
\begin{equation}\label{conLap}
\widehat{(f * g)}(s) = \widehat{f}(s) \cdot \widehat{g}(s),
\end{equation}
holds for all $s$ with $\Re s > \max(a_1, a_2)$.

The standard term-by-term derivation of the following Laplace
transform formulas, as presented in \cite{5}, requires the restriction
$p>\lambda^{1/\gamma}$. In the present paper we need these formulas
for every $p>0$, since we will let $p\downarrow0$. We therefore give
a detailed proof of the following lemma.

\begin{lem}\label{lem:laplace-ML}
Let $0<\gamma<1$ and $\lambda>0$. Set
\[
w(t) = E_\gamma(-\lambda t^\gamma), \qquad k_{\gamma,\lambda}(t)
   =t^{\gamma-1}E_{\gamma,\gamma}(-\lambda t^\gamma),
 \qquad t>0.
\]
Then, for every $p\in\C$ with $\operatorname{Re}p>0$, both integrals below
converge absolutely and
\begin{equation}\label{eq:ML-transforms}
 \begin{aligned}
 \int_0^\infty e^{-pt}w(t)\dd t
    &=\frac{p^{\gamma-1}}{p^\gamma+\lambda},\\
 \int_0^\infty e^{-pt}k_{\gamma,\lambda}(t)\dd t
    &=\frac{1}{p^\gamma+\lambda}.
 \end{aligned}
\end{equation}
The powers of $p$ are taken on the principal branch.
\end{lem}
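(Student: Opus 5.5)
The plan has three steps: establish absolute convergence, prove both formulas for large real $p$ by term-by-term integration, and extend them to all $\operatorname{Re}p>0$ by analytic continuation.

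\emph{Absolute convergence.} By \eqref{E0_1}, $0<w\le1$, so $|e^{-pt}w(t)|\le e^{-t\operatorname{Re}p}$, which is integrable on $(0,\infty)$. By Lemma~\ref{LaplasMLfunc}, $k_{\gamma,\lambda}\ge0$ and $k_{\gamma,\lambda}\in L^1(0,\infty)$ by \eqref{L1}. Hence $|e^{-pt}k_{\gamma,\lambda}(t)|\le k_{\gamma,\lambda}(t)$, and both integrals converge absolutely for $\operatorname{Re}p>0$.

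\emph{Analyticity.} Both sides of \eqref{eq:ML-transforms} are holomorphic in the half-plane $\{\operatorname{Re}p>0\}$. For the left-hand sides, differentiate under the integral sign, or use Morera's theorem with Fubini; the majorants above are uniform on compact subsets of the half-plane. For the right-hand sides, $p^\gamma$ is holomorphic on the principal branch. Moreover $|\arg p^\gamma|<\gamma\pi/2<\pi/2$, so $\operatorname{Re}p^\gamma>0$ and $p^\gamma+\lambda\neq0$. By the identity theorem, it therefore suffices to prove both formulas for real $p>\lambda^{1/\gamma}$.

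\emph{Large real $p$.} Expand in the series
\[
w(t)=\sum_{k\ge0}\frac{(-\lambda)^kt^{\gamma k}}{\Gamma(\gamma k+1)},\qquad
k_{\gamma,\lambda}(t)=\sum_{k\ge0}\frac{(-\lambda)^kt^{\gamma k+\gamma-1}}{\Gamma(\gamma k+\gamma)},
\]
and integrate term by term using $\int_0^\infty e^{-pt}t^{\sigma-1}\dd t=\Gamma(\sigma)p^{-\sigma}$. Termwise integration is justified by Tonelli applied to the absolute series:
\[
\sum_k\frac{\lambda^k\Gamma(\gamma k+1)}{\Gamma(\gamma k+1)}\,p^{-\gamma k-1}=p^{-1}\sum_k(\lambda p^{-\gamma})^k<\infty
\]
when $p>\lambda^{1/\gamma}$, and similarly for $k_{\gamma,\lambda}$. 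Summing the resulting geometric series gives $p^{-1}(1+\lambda p^{-\gamma})^{-1}=p^{\gamma-1}/(p^\gamma+\lambda)$ and $p^{-\gamma}(1+\lambda p^{-\gamma})^{-1}=1/(p^\gamma+\lambda)$.

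\emph{Alternative for the first formula.} One can instead derive it from the second. Since $w'=-\lambda k_{\gamma,\lambda}$ with $w(0)=1$, integration by parts gives
\[
\widehat w(p)=\frac{1-\lambda\widehat{k}(p)}{p}=\frac{p^{\gamma-1}}{p^\gamma+\lambda}.
\]
The boundary term at infinity vanishes because $w$ is bounded, and the term at $0$ is controlled because $k_{\gamma,\lambda}$ is integrable there.

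\emph{Main obstacle.} The delicate step is the extension from $p>\lambda^{1/\gamma}$ down to all $\operatorname{Re}p>0$. Here the series argument fails, so one must make sure the analyticity of the left-hand side genuinely uses only the $L^1$ bounds from Lemma~\ref{LaplasMLfunc}. Concretely, this requires a locally uniform majorant $e^{-\delta t}$ or $k_{\gamma,\lambda}$ on $\operatorname{Re}p\ge\delta>0$, including for the derivative in $p$, which is handled via Morera. One must also check that the principal-branch right-hand side has no singularity in the half-plane, which is the nonvanishing of $p^\gamma+\lambda$ established above.
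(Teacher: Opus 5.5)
Your proposal is correct and follows essentially the same route as the paper: absolute convergence from Lemma~\ref{LaplasMLfunc}, the term-by-term series computation for real $p>\lambda^{1/\gamma}$, holomorphy of both sides using $\operatorname{Re}p^\gamma>0$, and the identity theorem. The only difference is that the paper continues only the first formula analytically and then obtains the second from it by integrating $w'=-\lambda k_{\gamma,\lambda}$ by parts (your ``alternative'' run in reverse), whereas you continue both formulas directly, which is equally valid.
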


\begin{proof}

Let $\sigma=\operatorname{Re}p>0$. Then by Lemma \ref{LaplasMLfunc}
\[
 \begin{aligned}
 \int_0^\infty |e^{-pt}w(t)|\dd t
     &\leq\int_0^\infty e^{-\sigma t}\dd t=\frac1\sigma,\\
 \int_0^\infty |e^{-pt}k_{\gamma,\lambda}(t)|\dd t
     &\leq\int_0^\infty k_{\gamma,\lambda}(t)\dd t
       \leq\frac1\lambda.
 \end{aligned}
\]
Thus both Laplace integrals exist absolutely throughout the half-plane
$\operatorname{Re}p>0$.

\medskip
\noindent\emph{Next, we derive the first formula for
$p>\lambda^{1/\gamma}$ (see, e.g, \cite{5}).}
At this stage $p$ is real. The substitution $s=pt$ in Euler's gamma
integral yields
\[
 \int_0^\infty e^{-pt}t^{\gamma m}\dd t
       =\frac{\Gamma(\gamma m+1)}{p^{\gamma m+1}},
 \qquad m=0,1,\ldots.
\]
Consequently, when $p>\lambda^{1/\gamma}$,
\[
 \begin{aligned}
 \sum_{m=0}^{\infty}\int_0^\infty
 e^{-pt}\frac{\lambda^m t^{\gamma m}}{\Gamma(\gamma m+1)}\dd t
 &=\frac1p\sum_{m=0}^{\infty}
               \left(\frac{\lambda}{p^\gamma}\right)^m\\
 &=\frac{1}{p\bigl(1-\lambda p^{-\gamma}\bigr)}<\infty.
 \end{aligned}
\]
This is the absolute-integrability condition needed to interchange the
power series with the improper integral. Therefore,
\[
 \begin{aligned}
 F(p)&:=\int_0^\infty e^{-pt}E_\gamma(-\lambda t^\gamma)\dd t\\
 &=\sum_{m=0}^{\infty}
     \frac{(-\lambda)^m}{\Gamma(\gamma m+1)}
          \int_0^\infty e^{-pt}t^{\gamma m}\dd t\\
 &=\frac1p\sum_{m=0}^{\infty}
             \left(-\frac{\lambda}{p^\gamma}\right)^m
   =\frac1{p(1+\lambda p^{-\gamma})}
   =\frac{p^{\gamma-1}}{p^\gamma+\lambda}.
 \end{aligned}
\]
This is the usual series calculation behind the formula with the
restriction $p>\lambda^{1/\gamma}$; see \cite{5}.
For $0<p\leq\lambda^{1/\gamma}$, the geometric series just obtained
does not converge, so this particular calculation
cannot be used. It does not follow that the Laplace integral fails to
exist there: its absolute convergence has already been proved above.

\medskip
\noindent\emph{We now show that $F$ is holomorphic in
$\operatorname{Re}p>0$.}
Let $w(t) = E_\gamma(-\lambda t^\gamma)$ and
\[
 \mathcal H=\{p\in\C:\operatorname{Re}p>0\}.
\]
Fix an arbitrary $p_*\in\mathcal H$ and put
$\sigma_*=(\operatorname{Re}p_*)/2>0$. For $0<|h|<\sigma_*$,
\[
 \frac{e^{-(p_*+h)t}-e^{-p_*t}}{h}
      =-t\int_0^1 e^{-(p_*+\theta h)t}\dd\theta.
\]
Since $\operatorname{Re}(p_*+\theta h)\geq\sigma_*$ and
$|w(t)|\leq1$, we have
\[
 \left|\frac{e^{-(p_*+h)t}-e^{-p_*t}}{h}w(t)\right|
       \leq t e^{-\sigma_*t},\qquad
 \int_0^\infty t e^{-\sigma_*t}\dd t=\frac1{\sigma_*^2}.
\]
The majorant is independent of $h$ and integrable. Passing to the
limit under the integral by dominated convergence, we obtain the
complex derivative
\[
 F'(p_*)=-\int_0^\infty t e^{-p_*t}w(t)\dd t.
\]
Since $p_*$ was arbitrary, $F$ is holomorphic in $\mathcal H$.

\medskip
\noindent\emph{We also verify holomorphy of the right-hand side.}
To apply the identity theorem, it is not enough to know that $F$ is
holomorphic: the function
\[
 R(p)=\frac{p^{\gamma-1}}{p^\gamma+\lambda}
\]
must be a single-valued holomorphic function in the same half-plane.
For positive real $p$, its fractional powers have their usual meaning.
For complex $p$, we specify the branch of these powers as follows.

Every $p\in\mathcal H$ has a unique representation
\[
 p=|p|e^{i\theta},\qquad -\frac{\pi}{2}<\theta<\frac{\pi}{2}.
\]
We take the principal logarithm restricted to $\mathcal H$:
\[
 \operatorname{Log}p=\log|p|+i\theta,
 \qquad p^\rho=\exp\bigl(\rho\operatorname{Log}p\bigr),
 \quad \rho\in\mathbb R.
\]
This branch of the logarithm is holomorphic in $\mathcal H$, with
$(\operatorname{Log}p)'=1/p$. Therefore $p^\gamma$ and
$p^{\gamma-1}$ are holomorphic there. For real $p>0$ we have
$\theta=0$, so these definitions agree with the real powers used in
the calculation for $p>\lambda^{1/\gamma}$.

We must also check that the denominator does not vanish. Since
$0<\gamma<1$ and $|\theta|<\pi/2$, we have
$|\gamma\theta|<\pi/2$. Thus
\[
 \operatorname{Re}(p^\gamma)
       =|p|^\gamma\cos(\gamma\theta)>0,
 \qquad
 \operatorname{Re}(p^\gamma+\lambda)>\lambda>0.
\]
It follows that $p^\gamma+\lambda\ne0$ throughout $\mathcal H$.
Consequently, $R$ is holomorphic in $\mathcal H$.

\medskip
\noindent\emph{We now extend the equality to the whole half-plane.}
Both $F$ and $R$ are holomorphic in the connected open set
$\mathcal H$, and the series calculation above shows that they coincide
for every real $p>\lambda^{1/\gamma}$. Although this equality was
first obtained on a real interval, it is sufficient for the identity
theorem: fix any $q>\lambda^{1/\gamma}$ and consider
\[
 p_j=q+\frac1j,\qquad j=1,2,\ldots.
\]
The distinct zeros $p_j$ of $F-R$ converge to $q$, and $q$ lies
inside $\mathcal H$. The identity theorem therefore implies that
$F-R$ vanishes identically in $\mathcal H$. Hence
\[
 F(p)=\frac{p^{\gamma-1}}{p^\gamma+\lambda},
 \qquad p\in\mathcal H.
\]
Thus the first equality in \eqref{eq:ML-transforms} is proved
for all $\operatorname{Re}p>0$, in particular for every real $p>0$.
Notice that the identity theorem is applied in the open half-plane,
not at the boundary point $p=0$.

\medskip
\noindent\emph{Finally, we prove the second formula.}
Fix $p\in\mathcal H$ and take $0<\varepsilon<R<\infty$.
Using $w'(t)=-\lambda k_{\gamma,\lambda}(t)$ and integrating by parts on
this finite interval, we obtain
\[
 \begin{aligned}
 \int_\varepsilon^R e^{-pt}k_{\gamma,\lambda}(t)\dd t
 &=-\frac1\lambda\int_\varepsilon^R e^{-pt}w'(t)\dd t\\
 &=-\frac1\lambda\bigl[e^{-pt}w(t)\bigr]_\varepsilon^R
       -\frac p\lambda\int_\varepsilon^R e^{-pt}w(t)\dd t.
 \end{aligned}
\]
The boundary terms satisfy
\[
 e^{-p\varepsilon}w(\varepsilon)\longrightarrow1
       \quad(\varepsilon\downarrow0),\qquad
 |e^{-pR}w(R)|\leq e^{-(\operatorname{Re}p)R}
       \longrightarrow0\quad(R\to\infty).
\]
The integrals converge absolutely, as proved at the beginning of
this proof. Hence we may let $\varepsilon\downarrow0$ and
$R\to\infty$ in the integration-by-parts identity above. Using
the first Laplace transform formula just proved, we obtain
\[
 \begin{aligned}
 \int_0^\infty e^{-pt}k_{\gamma,\lambda}(t)\dd t
 &=\frac1\lambda\bigl(1-pF(p)\bigr)\\
 &=\frac1\lambda
        \left(1-\frac{p^\gamma}{p^\gamma+\lambda}\right)
  =\frac1{p^\gamma+\lambda}.
 \end{aligned}
\]
This proves the second equality and completes the proof of
\eqref{eq:ML-transforms}.
\end{proof}

In the eigenfunction expansions below, $\lambda$ is replaced by
$\lambda_n$, where $\lambda_n\to\infty$. No fixed $p>0$ can satisfy
$p>\lambda_n^{1/\gamma}$ for every $n$. Formula
\eqref{eq:ML-transforms}, in contrast, is valid for every $n$ at
one and the same arbitrary $p>0$. It also permits the later study
of the limit $p\downarrow0$. No value of the first Laplace integral
at $p=0$ is asserted or used.

Let $\{\lambda_n\}_{n\in\N}$ be the eigenvalues of $A$, repeated according to multiplicity, and choose an orthonormal eigenbasis $\{\varphi_n\}_{n\in\N}$. Thus
\[
 0<\lambda_1\leq\lambda_2\leq\cdots\longrightarrow\infty.
\]
Let us define fractional powers of the operator $A$ using the von Neumann theorem. Namely, let $\tau$ be a real number. Then
\[
A^\tau f = \sum_{k=1}^\infty \lambda_k^\tau\,  (f,\varphi_k) \,  \varphi_k,
\]
and the domain of the operator $A^\tau$ has the form
\[
 \D(A^\tau)=\left\{f\in L^2(\Omega):
       \sum_{n=1}^{\infty}\lambda_n^{2\tau}
                    |(f,\varphi_n)|^2<\infty\right\}.
\]
For $\tau>0$, $A^{-\tau}$ is bounded on $L^2(\Omega)$.

The scalar series associated with the observation satisfy the following estimate.

\begin{prop}\label{prop1}
For $h,\psi\in L^2(\Omega)$,
\begin{equation}\label{eq:weighted-summability}
 \sum_{n=1}^{\infty}
   |(h,\varphi_n)(\varphi_n,\psi)|
       \leq\|h\|\|\psi\|.
\end{equation}
Consequently, for every $k\in\Nzero$,
\[
 \sum_{n=1}^{\infty}
 \frac{|(h,\varphi_n)(\varphi_n,\psi)|}{\lambda_n^{k+1}}
       \leq\lambda_1^{-k-1}\|h\|\|\psi\|.
\]
In particular, no restriction on the dimension $N$ is needed for these weighted series.
\end{prop}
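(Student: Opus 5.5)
The plan is to apply the Cauchy--Schwarz inequality in $\ell^2$ to the sequences of Fourier coefficients of $h$ and $\psi$, and then to use Parseval's identity for the orthonormal basis $\{\varphi_n\}_{n\in\N}$. First, since $\{\varphi_n\}$ is an orthonormal basis of $L^2(\Omega)$ (the inverse of $A$ is compact and self-adjoint, so the eigenfunctions are complete), Parseval gives
\[
 \sum_{n=1}^\infty|(h,\varphi_n)|^2=\|h\|^2,\qquad
 \sum_{n=1}^\infty|(\varphi_n,\psi)|^2=\|\psi\|^2 .
\]
Then for every finite $M$ the Cauchy--Schwarz inequality yields
\[
 \sum_{n=1}^{M}|(h,\varphi_n)(\varphi_n,\psi)|
 \leq\Bigl(\sum_{n=1}^M|(h,\varphi_n)|^2\Bigr)^{1/2}
     \Bigl(\sum_{n=1}^M|(\varphi_n,\psi)|^2\Bigr)^{1/2}
 \leq\|h\|\|\psi\|.
\]
The partial sums of a series with nonnegative terms are thus uniformly bounded, so the series converges, and letting $M\to\infty$ gives \eqref{eq:weighted-summability}.

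For the second assertion, I would use the ordering $0<\lambda_1\leq\lambda_n$ for all $n$. This gives $\lambda_n^{-k-1}\leq\lambda_1^{-k-1}$ for each $k\in\Nzero$. Multiplying the terms of the series termwise by this factor and applying \eqref{eq:weighted-summability} yields the weighted bound $\lambda_1^{-k-1}\|h\|\|\psi\|$.

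The remark about the dimension is immediate from this argument. Only $L^2$-membership of $h$ and $\psi$ enters, and no asymptotics of $\lambda_n$ (Weyl's law) or pointwise bounds on $\varphi_n$ are used. There is no genuine obstacle here. The only point requiring a word of justification is the completeness of the eigenbasis, which follows from the spectral theorem for compact self-adjoint operators applied to $A^{-1}$, as recalled in Section~\ref{sec:introduction}.
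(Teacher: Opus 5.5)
Your proof is correct and is the same as the paper's: Cauchy--Schwarz on the two sequences of Fourier coefficients, then Parseval's identity, and finally $\lambda_n\geq\lambda_1$ for the weighted series. One small point is that Bessel's inequality already gives the bound on the partial sums, so completeness of the eigenbasis is not needed for this estimate.
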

\begin{proof}
Apply the Cauchy--Schwarz inequality and Parseval's identity to the two sequences of Fourier coefficients. The second estimate follows from $\lambda_n\geq\lambda_1$.
\end{proof}

\begin{lem}\label{vanishing_coefficients}
Let $\eta_1<\cdots<\eta_N$ be real numbers. If
\[
 \sum_{j=1}^N b_jp^{\eta_j}+o(p^{\eta_N})=0
       \qquad\hbox{as }p\downarrow0,
\]
then $b_j=0$ for every $j$.
\end{lem}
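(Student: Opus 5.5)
The proof goes by induction on $N$, peeling off the most singular term as $p\downarrow0$. Since $\eta_1<\eta_j$ for $j\ge2$, the smallest exponent $\eta_1$ dominates as $p\downarrow0$. Write the hypothesis as
\[
 \sum_{j=1}^N b_jp^{\eta_j}=r(p),\qquad \frac{r(p)}{p^{\eta_N}}\longrightarrow0\quad(p\downarrow0).
\]
Multiplying by $p^{-\eta_1}$ gives
\[
 b_1+\sum_{j=2}^N b_jp^{\eta_j-\eta_1}=p^{-\eta_1}r(p).
\]

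For each $j\ge2$ we have $\eta_j-\eta_1>0$, so $p^{\eta_j-\eta_1}\to0$. On the right-hand side,
\[
 p^{-\eta_1}r(p)=p^{\eta_N-\eta_1}\cdot\frac{r(p)}{p^{\eta_N}},
\]
where the first factor is bounded for $0<p\le1$ (it is either $1$ or tends to $0$) and the second factor tends to $0$. Letting $p\downarrow0$ therefore yields $b_1=0$.

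Once $b_1=0$ is known, the relation becomes
\[
 \sum_{j=2}^N b_jp^{\eta_j}+o(p^{\eta_N})=0,
\]
which has the same form with one fewer term. Repeating the argument (formally, induction on the number of terms, with the trivial base case $N=1$: $b_1p^{\eta_1}=o(p^{\eta_1})$ forces $b_1=0$) gives $b_2=\dots=b_N=0$.

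No step here is a real obstacle. The only point needing care is that the remainder is $o(p^{\eta_N})$, with the largest exponent, not $o(p^{\eta_1})$. This is the stronger assumption, because $p^{\eta_N}\le p^{\eta_j}$ for $p\le1$, so the remainder is also $o(p^{\eta_j})$ for every $j$. That is what keeps the remainder negligible at every stage of the induction, and it is why dividing by $p^{\eta_1}$ is legitimate. The exponents may be negative; this does not affect the argument, since only the differences $\eta_j-\eta_1$ matter.
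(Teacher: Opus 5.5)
Your proof is correct and follows the paper's argument: divide by $p^{\eta_1}$, let $p\downarrow0$ to get $b_1=0$ (the remainder divided by $p^{\eta_1}$ still tends to zero because $\eta_N\geq\eta_1$, including when $N=1$), then remove the first term and repeat. Your remark that the remainder stays $o(p^{\eta_N})$ with the same largest exponent at every stage of the induction makes explicit what the paper's ``remove the first term and repeat'' uses implicitly.
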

\begin{proof}
Dividing by $p^{\eta_1}$ and taking the limit gives $b_1=0$. The remainder divided by $p^{\eta_1}$ tends to zero, including the case $N=1$. Remove the first term and repeat. After finitely many steps all coefficients vanish.
\end{proof}

The existence and uniqueness of the solution of the forward problem follow from the next theorem. We state it for a general right-hand side $F$.

\begin{thm}[ {\cite[Theorem 4.2]{4}, \,\cite[Theorem~9]{7}}]\label{lem2}
Let $0<\alpha<1$, $T>0$, $a\in H_0^1(\Omega)$, and $F\in L^2(0,T;L^2(\Omega))$. Then the problem
\[
 \partial_t^\alpha(u-a)+Au=F,\qquad
 u-a\in H_\alpha(0,T;L^2(\Omega)),\qquad u|_{\partial\Omega}=0,
\]
has a unique solution such that
\[
 u\in L^2(0,T;H^2(\Omega)\cap H_0^1(\Omega)).
\]
Moreover, there exists a constant $C>0$, independent of $a$ and $F$, such that
\begin{equation}\label{eq:forward-regularity}
 \begin{split}
 &\|u-a\|_{H_\alpha(0,T;L^2(\Omega))}
    +\|u\|_{L^2(0,T;H^2(\Omega)\cap H_0^1(\Omega))}\\
 &\hspace{20mm}\leq C\bigl(\|a\|_{H_0^1(\Omega)}
                   +\|F\|_{L^2(0,T;L^2(\Omega))}\bigr).
 \end{split}
\end{equation}
\end{thm}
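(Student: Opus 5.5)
We plan to prove Theorem~\ref{lem2} by eigenfunction expansion: reduce to scalar fractional ODEs for the Fourier coefficients, solve these explicitly with Mittag--Leffler functions, and estimate uniformly in $n$ using Lemma~\ref{LaplasMLfunc}.

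\emph{Representation.} Write $a_n=(a,\varphi_n)$, $F_n(t)=(F(\cdot,t),\varphi_n)$ and
\[
 u_n(t)=a_nE_\alpha(-\lambda_nt^\alpha)+\int_0^tk_{\alpha,\lambda_n}(t-s)F_n(s)\dd s .
\]
We define the candidate $u=\sum_n u_n\varphi_n$. For each $n$ we will check that $u_n-a_n\in H_\alpha(0,T)$ and $\partial_t^\alpha(u_n-a_n)+\lambda_nu_n=F_n$. The cleanest route is to verify the equivalent integral identity $u_n-a_n=J^\alpha(F_n-\lambda_nu_n)$, since $\partial_t^\alpha=(J^\alpha)^{-1}$ by \eqref{eq:inverse-integral}. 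This identity follows from Laplace transforms: use Lemma~\ref{lem:laplace-ML} and \eqref{conLap}, together with $\widehat{J^\alpha g}(p)=p^{-\alpha}\widehat g(p)$, after extending $F_n$ by zero beyond $T$.

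\emph{Estimates.} We bound $\lambda_nu_n$ in $L^2(0,T)$ for each of the two terms.
\begin{itemize}
\item[(i)] For the convolution term, Young's inequality and \eqref{L1} give $\|\lambda_n k_{\alpha,\lambda_n}*F_n\|_{L^2(0,T)}\le \|F_n\|_{L^2(0,T)}$.
\item[(ii)] For the initial term, set $s=\lambda_n^{1/\alpha}t$ and use \eqref{eq:ML-bound}:
\[
 \lambda_n^2\int_0^T|E_\alpha(-\lambda_nt^\alpha)|^2\dd t\le C\lambda_n^{2-1/\alpha}\int_0^\infty\frac{\dd s}{(1+s^\alpha)^2}.
\]
The integral on the right is finite only when $\alpha>1/2$. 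So a direct bound is not uniform, and this is the main obstacle.
\end{itemize}

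To get past (ii) we will use the hypothesis $a\in H_0^1(\Omega)=\D(A^{1/2})$. This gives $\sum\lambda_n|a_n|^2\lesssim\|a\|_{H_0^1}^2$, so it suffices to show
\[
 \lambda_n\int_0^T|E_\alpha(-\lambda_nt^\alpha)|^2\dd t\le C.
\]
Scaling reduces this to $\lambda_n^{1-1/\alpha}\int_0^{\lambda_n^{1/\alpha}T}(1+s^\alpha)^{-2}\dd s$, which splits into three cases.
\begin{itemize}
\item For $\alpha>1/2$, the integral converges and $\lambda_n^{1-1/\alpha}\le\lambda_1^{1-1/\alpha}$.
\item For $\alpha<1/2$, the integral grows like $(\lambda_n^{1/\alpha}T)^{1-2\alpha}$, so the product is $O(\lambda_n^{-1}T^{1-2\alpha})$, which is bounded.
\item For $\alpha=1/2$, there is a logarithm $\log(\lambda_nT^{1/2})$ multiplied by $\lambda_n^{-1}$, which is again bounded.
\end{itemize}
Thus $\sum_n\|\lambda_nu_n\|_{L^2(0,T)}^2\le C(\|a\|_{H_0^1}^2+\|F\|^2_{L^2L^2})$. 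Elliptic regularity for $A$ with Dirichlet data, $\|v\|_{H^2}\le C\|Av\|$ on $\D(A)$, then yields $u\in L^2(0,T;H^2\cap H_0^1)$.

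\emph{Conclusion and uniqueness.} Since $\partial_t^\alpha(u-a)=F-Au\in L^2(0,T;L^2)$, the $H_\alpha$ norm is controlled as well, and we obtain \eqref{eq:forward-regularity} after verifying convergence of the series in these norms via the partial sums. For uniqueness, take the difference $w$ of two solutions. It satisfies $w\in H_\alpha$, $\partial_t^\alpha w+Aw=0$, and $w\in L^2(0,T;\D(A))$. Pairing with $\varphi_n$, the coefficient $w_n=(w,\varphi_n)$ satisfies $w_n=-\lambda_nJ^\alpha w_n$. This is a Volterra equation whose kernel is weakly singular; iterating gives $\|w_n\|\le (\lambda_n^kT^{k\alpha}/\Gamma(k\alpha+1))\|w_n\|\to0$ as $k\to\infty$, hence $w_n=0$ for all $n$, and $w=0$.
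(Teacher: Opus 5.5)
Your proof is correct. The paper does not prove this theorem itself but quotes it from \cite[Theorem 4.2]{4} and \cite[Theorem~9]{7}. Your eigenfunction-expansion argument follows the same route as the representation \eqref{eq:solution-series} that the paper records after the statement: the mode-wise Mittag--Leffler solution, Young's inequality with \eqref{L1} for the source term, the assumption $a\in\D(A^{1/2})=H_0^1(\Omega)$ for the initial term, and Volterra iteration for uniqueness.

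One small simplification: the three-case split in (ii) is unnecessary. Since $(1+x)^2\geq 4x$, you get $\lambda_n|E_\alpha(-\lambda_nt^\alpha)|^2\leq \frac{C_{\alpha,1}^2}{4}t^{-\alpha}$, and hence $\lambda_n\int_0^T|E_\alpha(-\lambda_nt^\alpha)|^2\dd t\leq \frac{C_{\alpha,1}^2T^{1-\alpha}}{4(1-\alpha)}$ uniformly in $n$ for every $\alpha\in(0,1)$.
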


In particular, the theorem applies to $F(x,t)=\mu(t)f(x)$, since
\[
 \|\mu f\|_{L^2(0,T;L^2(\Omega))}
       =\|\mu\|_{L^2(0,T)}\|f\|.
\]
For this right-hand side, separation of variables gives
\begin{equation}\label{eq:solution-series}
 \begin{split}
 u_{\alpha,\mu,a}(x,t)
  ={}&\sum_{n=1}^{\infty}
       E_\alpha(-\lambda_nt^\alpha)(a,\varphi_n)\varphi_n(x)\\
   &+\sum_{n=1}^{\infty}(f,\varphi_n)\varphi_n(x)
          \int_0^t k_{\alpha,\lambda_n}(t-s)\mu(s)\dd s.
 \end{split}
\end{equation}
Indeed, taking the scalar product of the equation with $\varphi_n$ yields
\[
 \partial_t^\alpha\bigl(u_n-(a,\varphi_n)\bigr)
     +\lambda_nu_n=\mu(t)(f,\varphi_n),\qquad
 u_n-(a,\varphi_n)\in H_\alpha(0,T),
\]
where $u_n(t)=(u(\cdot,t),\varphi_n)$. The scalar solution formula is classical and is found, for example, in  \cite[Theorem~7]{7}. This formula gives the coefficients in \eqref{eq:solution-series}.

We record the continuity and boundedness needed for the inverse problems. By \eqref{eq:kernel-mass},
\[
 \left|\int_0^t k_{\alpha,\lambda_n}(t-s)\mu(s)\dd s\right|
       \leq\frac{\|\mu\|_{L^\infty(0,T)}}{\lambda_n},\qquad 0\leq t\leq T.
\]
Each scalar coefficient in \eqref{eq:solution-series} is continuous in $t$. For the integral term this follows from continuity of translations in $L^1$, after extending the kernel and $\mu$ by zero outside the interval. For $m>\ell$, Parseval's identity gives
\[
 \begin{split}
 &\sup_{0\leq t\leq T}\left\|
       \sum_{n=\ell+1}^{m}
          E_\alpha(-\lambda_nt^\alpha)(a,\varphi_n)\varphi_n\right\|^2
          \leq\sum_{n=\ell+1}^{m}|(a,\varphi_n)|^2,\\
 &\sup_{0\leq t\leq T}\left\|
       \sum_{n=\ell+1}^{m}(f,\varphi_n)\varphi_n
          \int_0^t k_{\alpha,\lambda_n}(t-s)\mu(s)\dd s\right\|^2
       \leq\|\mu\|_{L^\infty(0,T)}^2
          \sum_{n=\ell+1}^{m}\frac{|(f,\varphi_n)|^2}{\lambda_n^2}.
 \end{split}
\]
The right-hand sides tend to zero as $\ell\to\infty$. Consequently, both series converge in $C([0,T];L^2(\Omega))$, and the solution in Theorem~\ref{lem2} has a representative satisfying
\begin{equation}\label{eq:forward-bound}
 u(\cdot,0)=a,\qquad
 \sup_{0\leq t\leq T}\|u(\cdot,t)\|
       \leq\|a\|+\|\mu\|_{L^\infty(0,T)}\|A^{-1}f\|.
\end{equation}
The equality of this representative with the strong solution follows from equality of all Fourier coefficients for almost every $t$. If $\mu\in L^\infty(0,\infty)$, uniqueness on overlapping intervals gives a bounded solution on the whole positive time axis. The same statements hold for $B$.

\begin{rmk}\label{rem:regularity}
The initial condition is imposed on $u-a$, not on $u$. For $\alpha>1/2$, the inclusion \eqref{eq:initial-space} already implies $u(\cdot,0)=a$ in the sense of the trace. For $\alpha\leq1/2$, this inclusion alone does not imply continuity at $t=0$. In the present problem, continuity follows from \eqref{eq:solution-series} and boundedness of $\mu$. The assumption $a\in H_0^1(\Omega)$ is used throughout to apply Theorem~\ref{lem2} in the stated strong-solution class.
\end{rmk}

We now formulate the main results.

\Needspace{12\baselineskip}
\begin{thm}\label{thm1}
Let $\alpha,\beta\in(0,1)\setminus\Q$, $a,b\in H_0^1(\Omega)$, and $f,\psi\in L^2(\Omega)$. Assume that $\mu,\nu\in L^\infty(0,\infty)$ and that, for some $p_0>0$,
\begin{equation}\label{mu}
 e^{p_0t}\mu,\ e^{p_0t}\nu\in L^1(0,\infty).
\end{equation}
Suppose that
\begin{equation}\label{fnot0}
 \int_\Omega f(x)\psi(x)\dd x\ne0
\end{equation}
and that the common observation
\begin{equation}\label{upsiequal}
 \begin{split}
 y(t)&:=\int_\Omega u_{\alpha,\mu,a}(x,t)\psi(x)\dd x\\
     &=\int_\Omega u_{\beta,\nu,b}(x,t)\psi(x)\dd x,
                  \qquad t>0,
 \end{split}
\end{equation}
is not identically zero. Then
\[
 \alpha=\beta,\qquad
 \mu=\nu\quad\hbox{almost everywhere on }(0,\infty).
\]
Moreover, if $P_n$ is the orthogonal projection onto the eigenspace corresponding to any distinct eigenvalue $\xi_n$ of $A$, then
\[
 (P_n(a-b),\psi)=0.
\]
In particular,
\[
 \int_\Omega a(x)\psi(x)\dd x
       =\int_\Omega b(x)\psi(x)\dd x.
\]
\end{thm}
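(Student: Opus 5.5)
We plan to pass to Laplace transforms in $t$ and expand both sides in fractional powers of $p$ as $p\downarrow0$. Group the eigenfunctions by the distinct eigenvalues $\xi_n$ of $A$, and set $A_n=(P_na,\psi)$, $B_n=(P_nb,\psi)$, $F_n=(P_nf,\psi)$. Since $\mu,\nu\in L^\infty$ and $e^{p_0t}\mu\in L^1$, the series \eqref{eq:solution-series}, Proposition~\ref{prop1}, Fubini's theorem, the convolution formula \eqref{conLap} and Lemma~\ref{lem:laplace-ML} (valid for every $\operatorname{Re}p>0$) give, for real $p>0$,
\[
 \widehat y(p)=p^{\alpha-1}\Phi_a(p^\alpha)+\widehat\mu(p)\Phi_f(p^\alpha),
 \qquad
 \Phi_h(s)=\sum_n\frac{(P_nh,\psi)}{s+\xi_n}.
\]
The same formula holds with $\beta,\nu,b$ in place of $\alpha,\mu,a$, with the same $\Phi_f$. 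By Proposition~\ref{prop1}, $\Phi_h$ is meromorphic in $\C$ with at most simple poles at the points $-\xi_n$, whose residues are $(P_nh,\psi)$. It is holomorphic in $|s|<\xi_1$, with Taylor coefficients $(-1)^kS_k(h)$, where $S_k(h)=\sum_n(P_nh,\psi)\xi_n^{-k-1}$. By \eqref{mu}, $\widehat\mu$ is holomorphic in $\operatorname{Re}p>-p_0$, so $\widehat\mu(p)=\sum_j m_jp^j$ near $0$. Moreover, $s\Phi_f(s)\to(f,\psi)\neq0$ as $s\to+\infty$ by \eqref{fnot0} and dominated convergence. Hence $\Phi_f\not\equiv0$, and $\Phi_f$ is not constant, so $S_k(f)\neq0$ for some $k\geq1$.

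Consequently $\widehat y(p)$ has, as $p\downarrow0$, a generalized expansion with exponents in $\{\alpha-1+\alpha k\}\cup\{j+\alpha k\}$. Only finitely many exponents lie below any fixed level, so Lemma~\ref{vanishing_coefficients} applies after truncation, and the coefficients of the expansion are uniquely determined. Because $\alpha\notin\Q$, each number $j+\alpha k$ has a unique representation of this form. The exponent $\alpha(k+1)-1$ belongs to the first family and is not an integer.

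\emph{Step 1: $\alpha=\beta$.} We first show that the expansion of $\widehat y$ contains a nonzero coefficient at a non-integer exponent. If $\mu\not\equiv0$, take the least $J$ with $m_J\neq0$ and the least $k\geq1$ with $S_k(f)\neq0$. If some $S_k(a)\neq0$, we use that term instead. If neither case occurs, then $\mu=0$ and all $S_k(a)=0$, so $\Phi_a\equiv0$ and $y\equiv0$ by Lerch's theorem, which is excluded. Equating the $\alpha$- and $\beta$-expansions, every non-integer exponent carrying a nonzero coefficient must lie in $(\Z+\alpha\Nzero)\cap(\Z+\beta\Nzero)$. If $\alpha/\beta\notin\Q$, this intersection consists of integers only, which is a contradiction.

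The case $\alpha/\beta\in\Q\setminus\{1\}$ is the main obstacle. Here I would compare the \emph{lowest} non-integer exponent on each side: $\alpha k_0+j_0$ versus $\beta k_0'+j_0'$, together with the shifted families $\alpha(k+1)-1$. I would also use the specific structure: the coefficient at $j+\alpha k$ is $\sum_{i\le j}m_i(-1)^kS_k(f)$ convolved in $j$, while the $a$-terms enter only at exponents $\equiv -1$ modulo $\Z$. If this bookkeeping becomes unwieldy, a fallback is analytic continuation in $p$ around $0$. The functions $p\mapsto\widehat y(p)$ continue to the Riemann surface of $\log p$, and going once around $0$ multiplies $p^\alpha$ by $e^{2\pi i\alpha}$ and $p^\beta$ by $e^{2\pi i\beta}$. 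Comparing the two monodromy actions, which are both of infinite order, should force $e^{2\pi i\alpha}=e^{2\pi i\beta}$, that is, $\alpha=\beta$.

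\emph{Step 2: the remaining conclusions.} Once $\alpha=\beta$, the exponents $\alpha(k+1)-1$ arise only from the initial parts. Hence $S_k(a)=S_k(b)$ for all $k$, so $\Phi_a=\Phi_b$ near $0$, and therefore on all of $\C$ by meromorphic continuation. Subtracting the two formulas gives $(\widehat\mu-\widehat\nu)(p)\Phi_f(p^\alpha)=0$ for $p>0$. Since $\Phi_f\not\equiv0$ has isolated zeros, $\widehat\mu=\widehat\nu$, and Lerch's theorem yields $\mu=\nu$ almost everywhere. Finally, equality of the residues of $\Phi_a$ and $\Phi_b$ at $-\xi_n$ gives $(P_n(a-b),\psi)=0$. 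Summing over $n$, which is justified by Proposition~\ref{prop1}, gives $(a,\psi)=(b,\psi)$.
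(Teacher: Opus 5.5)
\textbf{Step 1 has a genuine gap.} This is the step where you prove $\alpha=\beta$, which is the heart of the theorem.

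First, the claim that $\alpha/\beta\notin\Q$ forces $(\Z+\alpha\Nzero)\cap(\Z+\beta\Nzero)\subset\Z$ is false. What that conclusion actually requires is $\Q$-linear independence of $1,\alpha,\beta$. For example, take $\beta\in(0,\tfrac12)$ irrational and $\alpha=\beta+\tfrac12$. Then $\alpha\in(0,1)\setminus\Q$ and $\alpha/\beta=1+1/(2\beta)\notin\Q$, yet $2\alpha+j=2\beta+(j+1)$ is a common non-integer exponent. So your single nonzero coefficient at a non-integer exponent (the ``least $J$, least $k$'' term) can be matched by a term on the other side, and no contradiction follows.

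Second, the case $\alpha/\beta\in\Q\setminus\{1\}$ is left open. The ``lowest exponent'' bookkeeping is never carried out, and the monodromy fallback is only a heuristic. Going once around $p=0$ multiplies the $\alpha$-terms by $e^{2\pi i\alpha k}$. Deciding which of these can coincide with some $e^{2\pi i\beta k'}$ is the same arithmetic problem, so monodromy does not bypass anything. A minor point: for irrational $\alpha$, the coefficient at $j+\alpha k$ is the single product $m_j(-1)^kS_k(f)$, not a convolution.

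\textbf{The missing idea is a whole tail of nonzero coefficients rather than one.} Let $n_0$ be the first index with $(P_{n_0}f,\psi)\ne0$; it exists because $(f,\psi)=\sum_n(P_nf,\psi)\ne0$. By dominated convergence, $\xi_{n_0}^{k+1}S_k(f)\to(P_{n_0}f,\psi)$. Hence $S_k(f)\ne0$ for \emph{every} sufficiently large $k$. Likewise $S_k(a)\ne0$ for all large $k$ whenever some $(P_na,\psi)\ne0$. Your observation that $\Phi_f$ is nonconstant gives only one such $k$.

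Now assume $\alpha<\beta$. The nonzero coefficients at $\alpha j+J$ (or at $\alpha j-1$), for all large $j$, must reappear as exponents $\beta K_j+L_j$ with $K_j\ge0$ and $L_j\ge-1$. Subtract consecutive identities. Since $\beta$ is irrational, $(K,L)\mapsto\beta K+L$ is injective on $\Z^2$, so the increments are constant. This gives $\alpha=\beta K+L$ with $K,L\ge0$. Because $0<\alpha<1$, we get $L=0$ and $K\ge1$, so $\alpha\ge\beta$, a contradiction. The argument treats rational and irrational ratios $\alpha/\beta$ uniformly; it is the content of the paper's Steps 2--4. It shows $\mu=0$ and all $(P_na,\psi)=0$ when $\alpha<\beta$, contradicting $y\not\equiv0$.

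\textbf{Step 2 is sound and is a tidy alternative to the paper's Step 5.} Equal Taylor coefficients of $\Phi_a$ and $\Phi_b$, followed by meromorphic continuation and comparison of residues, give $(P_n(a-b),\psi)=0$. Dividing by $\Phi_f(p^\alpha)$, whose zeros on $(0,\infty)$ are isolated, gives $\widehat\mu=\widehat\nu$. This route avoids needing tail nonvanishing of $S_k(f)$ in that step.
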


Theorem~\ref{thm1} answers Inverse Problem~1. We emphasize that it does not require $(A^{-1}a,\psi)\ne0$ or any prescribed initial value. The nontriviality condition concerns only the measured function $y$.

For Inverse Problem~2, let $A$ and $B$ be the elliptic Dirichlet operators introduced in Section~\ref{sec:introduction}.

\Needspace{10\baselineskip}
\begin{thm}\label{thm2}
Let $\alpha,\beta\in(0,1)$, $a,b\in H_0^1(\Omega)$, and $f,g,\psi\in L^2(\Omega)$. Suppose that $\mu,\nu\in L^\infty(0,\infty)$ satisfy \eqref{mu}, and assume that
\begin{equation}\label{Aa}
 (A^{-1}a,\psi)\ne0,\qquad (B^{-1}b,\psi)\ne0.
\end{equation}
If
\[
 \int_\Omega u_{A,a,\mu,f,\alpha}(x,t)\psi(x)\dd x
 =\int_\Omega u_{B,b,\nu,g,\beta}(x,t)\psi(x)\dd x,
 \qquad t>0,
\]
then
\[
 \alpha=\beta,\qquad (A^{-1}a,\psi)=(B^{-1}b,\psi).
\]
\end{thm}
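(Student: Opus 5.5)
We take the Laplace transform of the common observation and compare the behaviour of both sides as $p\downarrow0$. Write $y(t)$ for the common observation. By \eqref{eq:forward-bound}, $y$ is bounded and continuous on $[0,\infty)$, so $\widehat y(p)$ exists for every $p>0$.

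\emph{Laplace transform of the $A$-side.} Expand $y$ via \eqref{eq:solution-series}, pairing with $\psi$. For $p>0$ we transform term by term; Fubini and summation are justified by Proposition~\ref{prop1} together with $0<E_\alpha\le1$ and \eqref{L1}. Lemma~\ref{lem:laplace-ML} and \eqref{conLap} then give
\[
 \widehat y(p)=\sum_{n=1}^\infty\frac{p^{\alpha-1}(a,\varphi_n)(\varphi_n,\psi)}{p^\alpha+\lambda_n}
 +\widehat\mu(p)\sum_{n=1}^\infty\frac{(f,\varphi_n)(\varphi_n,\psi)}{p^\alpha+\lambda_n}.
\]
By \eqref{mu}, $\widehat\mu$ is bounded and analytic near $p=0$, in fact on $\operatorname{Re}p>-p_0$; in particular $\widehat\mu(p)=\widehat\mu(0)+O(p)$.

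\emph{Small-$p$ expansion.} Since $p^\alpha+\lambda_n\ge\lambda_n\ge\lambda_1$, we can expand $1/(p^\alpha+\lambda_n)=\lambda_n^{-1}-p^\alpha\lambda_n^{-1}(p^\alpha+\lambda_n)^{-1}$. Proposition~\ref{prop1} bounds the remainders uniformly, giving
\[
 \widehat y(p)=(A^{-1}a,\psi)\,p^{\alpha-1}+O(p^{2\alpha-1})+O(1),\qquad p\downarrow0.
\]
The source part is $O(1)$ because it is $\widehat\mu(p)$ times a bounded sum. Since $\alpha<1$, the leading term $p^{\alpha-1}$ dominates both $O(1)$ and $O(p^{2\alpha-1})$. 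The same computation on the $B$-side gives $(B^{-1}b,\psi)\,p^{\beta-1}+o(p^{\beta-1})$.

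\emph{Conclusion.} Equating the two expansions yields
\[
 (A^{-1}a,\psi)p^{\alpha-1}-(B^{-1}b,\psi)p^{\beta-1}=o\bigl(p^{\max(\alpha,\beta)-1}\bigr)
 +O(1)+O\bigl(p^{2\min(\alpha,\beta)-1}\bigr).
\]
Suppose $\alpha<\beta$. Dividing by $p^{\alpha-1}$ and letting $p\downarrow0$, every other term tends to $0$: $p^{\beta-\alpha}\to0$, $p^{1-\alpha}\to0$, and the remainder divided by $p^{\alpha-1}$ is $o(1)$. This forces $(A^{-1}a,\psi)=0$, contradicting \eqref{Aa}. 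By symmetry $\alpha=\beta$. Multiplying by $p^{1-\alpha}$ and letting $p\downarrow0$ then gives $(A^{-1}a,\psi)=(B^{-1}b,\psi)$.

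\emph{Main obstacle.} The delicate step is keeping the remainder estimates uniform in $n$. We need the remainder of the initial-value sum to be $O(p^{2\alpha-1})$, which holds because
\[
 \sum_n\frac{|(a,\varphi_n)(\varphi_n,\psi)|}{\lambda_n(p^\alpha+\lambda_n)}\le\lambda_1^{-2}\|a\|\|\psi\|,
\]
and we need the source part to be genuinely $O(1)$ rather than merely bounded by $|\widehat\mu(p)|$ times something growing. For the latter we use $|\widehat\mu(p)|\le\|\mu\|_{L^1}$, which is finite by \eqref{mu}. Beyond these uniform bounds, the only care needed is justifying the interchange of the Laplace integral with the eigenfunction series for every fixed $p>0$. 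This is exactly why Lemma~\ref{lem:laplace-ML} is valid on the whole half-plane $\operatorname{Re}p>0$ rather than only for $p>\lambda_n^{1/\alpha}$.
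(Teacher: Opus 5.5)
Your proof is correct and follows essentially the same route as the paper: you Laplace-transform the observation via the eigenfunction expansion and Lemma~\ref{lem:laplace-ML}, use $|\widehat\mu(p)|\le\|\mu\|_{L^1}$ to make the source contribution negligible, and compare the leading terms $(A^{-1}a,\psi)p^{\alpha-1}$ and $(B^{-1}b,\psi)p^{\beta-1}$ as $p\downarrow0$. The only cosmetic difference is that you bound the initial-value remainder explicitly by $O(p^{2\alpha-1})$ via the resolvent identity, whereas the paper multiplies by $p^{1-\alpha}$ and passes to the limit by dominated convergence.
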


The next result uses observations only near $t=0$. Here $\D(A^2)=\{w\in\D(A):Aw\in\D(A)\}$, and $\D(B^2)$ is defined similarly. Thus $\psi\in\D(A^2)\cap\D(B^2)$ means that $\psi$, $A\psi$, and $B\psi$ all belong to $H^2(\Omega)\cap H_0^1(\Omega)$.

\Needspace{10\baselineskip}
\begin{thm}\label{thm3}
Let $\alpha,\beta\in(0,1)$, $a,b\in H_0^1(\Omega)$, and $f,g\in L^2(\Omega)$. Fix $T>0$. Assume that $\mu,\nu\in L^\infty(0,T)$ admit representatives such that
\[
 \mu(t)=\mu_0+O(t),\qquad
 \nu(t)=\nu_0+O(t)\qquad(t\downarrow0),
\]
where $\mu_0,\nu_0\in\R$. Let $\psi\in\D(A^2)\cap\D(B^2)$ and suppose that
\begin{equation}\label{condition_on_m_and_n}
 \begin{split}
 c_A&:=\mu_0(f,\psi)-(a,A\psi)\ne0,\\
 c_B&:=\nu_0(g,\psi)-(b,B\psi)\ne0.
 \end{split}
\end{equation}
If
\[
 \int_\Omega u_{A,a,\mu,f,\alpha}(x,t)\psi(x)\dd x
 =\int_\Omega u_{B,b,\nu,g,\beta}(x,t)\psi(x)\dd x,
 \qquad 0<t<T,
\]
then
\[
 \alpha=\beta,\qquad (a,\psi)=(b,\psi),\qquad c_A=c_B.
\]
\end{thm}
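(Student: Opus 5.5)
Theorem~\ref{thm3} is the last statement, so the plan is for it. I would expand the observation for small $t$ and compare the leading nonconstant fractional powers. Write $y_A(t)=(u_{A,a,\mu,f,\alpha}(\cdot,t),\psi)$. The first step is the short-time expansion
\[
 y_A(t)=(a,\psi)+\frac{c_A}{\Gamma(\alpha+1)}t^\alpha+O(t^{2\alpha})+O(t^{1+\alpha})\qquad(t\downarrow0),
\]
and the analogous expansion for $y_B$ with $\beta$, $c_B$.

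To obtain it, I use the series \eqref{eq:solution-series} paired with $\psi$. Since $\psi\in\D(A^2)$, we have $(a,\varphi_n)(\varphi_n,\psi)=\lambda_n^{-2}(a,\varphi_n)(\varphi_n,A^2\psi)$.

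\emph{Initial part.} Write
\[
 E_\alpha(-\lambda t^\alpha)=1-\frac{\lambda t^\alpha}{\Gamma(\alpha+1)}+\lambda^2t^{2\alpha}E_{\alpha,2\alpha+1}(-\lambda t^\alpha).
\]
Summing and using Proposition~\ref{prop1} with $h=a$ and weight $A^2\psi$, together with the boundedness of $E_{\alpha,2\alpha+1}$ on $(-\infty,0]$ from \eqref{eq:ML-bound}, the remainder is at most $t^{2\alpha}C\|a\|\|A^2\psi\|$. The first two terms give $(a,\psi)-(a,A\psi)t^\alpha/\Gamma(\alpha+1)$.

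\emph{Source part.} Split $\mu=\mu_0+(\mu-\mu_0)$ with $|\mu(s)-\mu_0|\le Cs$. By \eqref{eq:kernel-mass},
\[
 \mu_0\int_0^tk_{\alpha,\lambda_n}=\mu_0\,\frac{1-E_\alpha(-\lambda_nt^\alpha)}{\lambda_n}=\mu_0\left(\frac{t^\alpha}{\Gamma(\alpha+1)}-\lambda_nt^{2\alpha}E_{\alpha,2\alpha+1}(-\lambda_nt^\alpha)\right).
\]
Pairing with $(f,\varphi_n)(\varphi_n,\psi)=\lambda_n^{-1}(f,\varphi_n)(\varphi_n,A\psi)$ and applying Proposition~\ref{prop1} again gives $\mu_0(f,\psi)t^\alpha/\Gamma(\alpha+1)+O(t^{2\alpha})$. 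For the other piece, nonnegativity of $k$ gives
\[
 \left|\int_0^tk_{\alpha,\lambda_n}(t-s)(\mu(s)-\mu_0)\dd s\right|\le Ct\int_0^tk_{\alpha,\lambda_n}(s)\dd s\le Ct\cdot\frac{t^\alpha}{\Gamma(\alpha+1)},
\]
using $1-E_\alpha(-x)\le x/\Gamma(\alpha+1)$, which follows from the expansion above and complete monotonicity. Summed against $|(f,\varphi_n)(\varphi_n,\psi)|$ this is $O(t^{1+\alpha})$. All series converge absolutely, so term-by-term summation is justified.

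Next I compare the two expansions. Letting $t\downarrow0$ gives $(a,\psi)=(b,\psi)$. Suppose $\alpha<\beta$. Subtracting the two expansions and dividing by $t^\alpha$ gives
\[
 \frac{c_A}{\Gamma(\alpha+1)}=\frac{c_B}{\Gamma(\beta+1)}t^{\beta-\alpha}+o(1)\to0,
\]
which contradicts $c_A\ne0$. The case $\beta<\alpha$ is symmetric, using $c_B\neq0$. Hence $\alpha=\beta$, and then the $t^\alpha$ coefficients coincide, so $c_A=c_B$. This is Lemma~\ref{vanishing_coefficients} in the variable $t$. The remainders $O(t^{2\alpha})$ and $O(t^{1+\alpha})$ are all $o(t^{\min(\alpha,\beta)})$, so the comparison is legitimate.

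The main obstacle is the uniform control of the remainders over $n$, since $\lambda_n\to\infty$ and one cannot expand $E_\alpha(-\lambda_nt^\alpha)$ as a power series uniformly in $n$. The remedy is the exact identity with $E_{\alpha,2\alpha+1}$, together with the two extra powers of $\lambda_n^{-1}$ supplied by $\psi\in\D(A^2)$. This is exactly why that assumption on $\psi$ is imposed.
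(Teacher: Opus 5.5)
Your proof is correct and follows the paper's route. It uses the same exact Mittag--Leffler identity for the initial part, with $\psi\in\D(A^2)$ absorbing $\lambda_n^2$, arrives at the same expansion $y_A(t)=(a,\psi)+c_At^\alpha/\Gamma(1+\alpha)+o(t^\alpha)$, and compares the leading fractional terms in the same way. The only difference is in the source term. You split $\mu=\mu_0+(\mu-\mu_0)$ and use the kernel-mass formula, positivity of $k_{\alpha,\lambda_n}$, and the convexity bound $1-E_\alpha(-x)\le x/\Gamma(1+\alpha)$, whereas the paper splits the kernel via $E_{\alpha,\alpha}(z)=1/\Gamma(\alpha)+zE_{\alpha,2\alpha}(z)$ and then expands $J^\alpha\mu$; both give the same $O(t^{2\alpha})+O(t^{1+\alpha})$ remainder.
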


\begin{rmk}
Condition \eqref{mu} is not assumed in Theorem~\ref{thm3} (in particular, since the inverse problem is considered here over a finite time interval). The values $\mu_0$ and $\nu_0$ may be denoted by $\mu(0)$ and $\nu(0)$ after choosing the indicated representatives. Unlike Theorem~\ref{thm1}, Theorems~\ref{thm2} and~\ref{thm3} do not require the orders to be irrational. They determine the order, but do not assert equality of the different elliptic operators or source terms.
\end{rmk}

\section{Proof of Theorem~\ref{thm1}}\label{sec:simultaneous}

\begin{proof}
We apply the Laplace transform directly to the eigenfunction
representation \eqref{eq:solution-series}. For the first solution, this representation is
\[
 \begin{split}
 u_{\alpha,\mu,a}(x,t)
 ={}&\sum_{n=1}^{\infty}
       E_\alpha(-\lambda_nt^\alpha)(a,\varphi_n)\varphi_n(x)\\
 &+\sum_{n=1}^{\infty}(f,\varphi_n)\varphi_n(x)
       \int_0^t k_{\alpha,\lambda_n}(t-s)\mu(s)\dd s.
 \end{split}
\]
The series are understood in $L^2(\Omega)$.
We first justify integration term by term.

Set $M_\mu=\|\mu\|_{L^\infty(0,\infty)}$.
By nonnegativity of the kernel and
\eqref{eq:kernel-mass},
\[
 \left|\int_0^t k_{\alpha,\lambda_n}(t-s)\mu(s)\dd s\right|
 \leq M_\mu\int_0^t k_{\alpha,\lambda_n}(r)\dd r
 \leq\frac{M_\mu}{\lambda_n},\qquad t\geq0.
\]
Together with $|E_\alpha(-\lambda_nt^\alpha)|\leq1$ and Parseval's
identity, this gives
\[
 \begin{aligned}
 \|u_{\alpha,\mu,a}(\cdot,t)\|
 &\leq\|a\|+
       M_\mu\left(\sum_{n=1}^{\infty}
                    \frac{|(f,\varphi_n)|^2}{\lambda_n^2}\right)^{1/2}\\
 &\leq\|a\|+\frac{M_\mu}{\lambda_1}\|f\|,
 \qquad t\geq0.
 \end{aligned}
\]
Consequently, for any fixed real $p>0$,
\[
 \int_0^\infty e^{-pt}\|u_{\alpha,\mu,a}(\cdot,t)\|\dd t
   \leq\frac1p\left(\|a\|+\frac{M_\mu}{\lambda_1}\|f\|\right)
   <\infty.
\]
Thus $\widehat u_{\alpha,\mu,a}(p)$ exists as an $L^2(\Omega)$-valued
integral.

The same estimates and Parseval's identity show that the partial
sums of the two series in \eqref{eq:solution-series} are bounded
in $L^2(\Omega)$ by $\|a\|$ and
$M_\mu\lambda_1^{-1}\|f\|$, respectively, uniformly in $t\geq0$
and in the number of terms. These partial sums converge in
$L^2(\Omega)$ for each fixed $t$. For every fixed $p>0$,
multiplication by $e^{-pt}$ provides integrable majorants on
$(0,\infty)$. Therefore, the dominated convergence theorem
for Bochner integrals justifies termwise Laplace integration
of both series in \eqref{eq:solution-series}.

Applying Lemma \ref{lem:laplace-ML} and formula (\ref{conLap}) to \eqref{eq:solution-series}, we obtain
\[
 \begin{split}
 \widehat u_{\alpha,\mu,a}(p)
 ={}&\sum_{n=1}^{\infty}
       \frac{p^{\alpha-1}}{p^\alpha+\lambda_n}
                   (a,\varphi_n)\varphi_n\\
 &+\widehat\mu(p)\sum_{n=1}^{\infty}
       \frac{1}{p^\alpha+\lambda_n}
                   (f,\varphi_n)\varphi_n,
 \qquad p>0.
 \end{split}
\]
Similarly, for the second solution we obtain
\[
 \begin{split}
 \widehat u_{\beta,\nu,b}(p)
 ={}&\sum_{n=1}^{\infty}
       \frac{p^{\beta-1}}{p^\beta+\lambda_n}
                   (b,\varphi_n)\varphi_n\\
 &+\widehat\nu(p)\sum_{n=1}^{\infty}
       \frac{1}{p^\beta+\lambda_n}
                   (f,\varphi_n)\varphi_n,
 \qquad p>0.
 \end{split}
\]

Set
\[
 \begin{aligned}
 a_n&=(a,\varphi_n)(\varphi_n,\psi),\qquad
 b_n=(b,\varphi_n)(\varphi_n,\psi),\\
 f_n&=-(f,\varphi_n)(\varphi_n,\psi).
 \end{aligned}
\]
By the Cauchy--Schwarz inequality and Parseval's identity,
\[
 \begin{aligned}
 \sum_{n=1}^{\infty}|a_n|
 &\leq\left(\sum_{n=1}^{\infty}|(a,\varphi_n)|^2\right)^{1/2}
       \left(\sum_{n=1}^{\infty}|(\varphi_n,\psi)|^2\right)^{1/2}\\
 &=\|a\|\,\|\psi\|.
 \end{aligned}
\]
The same estimate applies to $b$ and $f$, and hence
\[
 \sum_{n=1}^{\infty}(|a_n|+|b_n|+|f_n|)
       \leq(\|a\|+\|b\|+\|f\|)\|\psi\|<\infty.
\]
In particular, all the scalar series below converge absolutely.
For example,
\[
 \sum_{n=1}^{\infty}\frac{|a_n|}{p^\alpha+\lambda_n}
       \leq\frac{\|a\|\,\|\psi\|}{p^\alpha+\lambda_1},
 \qquad p>0.
\]

Finally, by the observation identity \eqref{upsiequal},
\[
 (u_{\alpha,\mu,a}(\cdot,t),\psi)
       =(u_{\beta,\nu,b}(\cdot,t),\psi),\qquad t>0.
\]
The two scalar functions are bounded, so their Laplace transforms
exist for $p>0$. Taking the scalar product with $\psi$ commutes with
the $L^2(\Omega)$-valued integral, because
\[
 |(u(\cdot,t),\psi)|\leq\|u(\cdot,t)\|\,\|\psi\|.
\]
Thus the observation identity yields
\[
 (\widehat u_{\alpha,\mu,a}(p),\psi)
       =(\widehat u_{\beta,\nu,b}(p),\psi),\qquad p>0.
\]
Substituting the two eigenfunction expansions and using the
definitions of $a_n$, $b_n$, and $f_n$, we arrive at
\begin{equation}\label{laplace}
 \begin{split}
 p^{\alpha-1}\sum_{n=1}^{\infty}\frac{a_n}{p^\alpha+\lambda_n}
 -\widehat\mu(p)\sum_{n=1}^{\infty}\frac{f_n}{p^\alpha+\lambda_n}
 ={}&p^{\beta-1}\sum_{n=1}^{\infty}\frac{b_n}{p^\beta+\lambda_n}\\
 &-\widehat\nu(p)\sum_{n=1}^{\infty}
                    \frac{f_n}{p^\beta+\lambda_n},
 \qquad p>0.
 \end{split}
\end{equation}

In what follows, $p$ tends to zero through positive values.

Define the inverse spectral moments
\begin{equation}\label{r0}
 \begin{aligned}
 q_k&=(-1)^k\sum_{n=1}^{\infty}\frac{a_n}{\lambda_n^{k+1}},
 &d_k&=(-1)^k\sum_{n=1}^{\infty}\frac{b_n}{\lambda_n^{k+1}},\\
 r_k&=(-1)^k\sum_{n=1}^{\infty}\frac{f_n}{\lambda_n^{k+1}},
 &&k\in\Nzero.
 \end{aligned}
\end{equation}
All these series converge absolutely. For $z\geq0$ and $M\in\Nzero$, the exact identity
\[
 \frac1{\lambda_n+z}
 =\sum_{k=0}^M\frac{(-z)^k}{\lambda_n^{k+1}}
   +\frac{(-z)^{M+1}}{\lambda_n^{M+1}(\lambda_n+z)}
\]
has a remainder bounded by $z^{M+1}\lambda_1^{-M-2}$. It can therefore be summed against any of the three absolutely summable sequences above. Since \eqref{mu} implies boundedness of $\widehat\mu$ and $\widehat\nu$ near zero, \eqref{laplace} gives
\begin{equation}\label{overdefcon3}
 \begin{split}
 &\sum_{k=0}^{M}q_kp^{\alpha(k+1)-1}
      -\widehat\mu(p)\sum_{j=0}^{m}r_jp^{\alpha j}\\
 &\quad=\sum_{k=0}^{M'}d_kp^{\beta(k+1)-1}
      -\widehat\nu(p)\sum_{j=0}^{m'}r_jp^{\beta j}
      +O(p^{\alpha(M+2)-1})+O(p^{\alpha(m+1)})\\
 &\hspace{42mm}+O(p^{\beta(M'+2)-1})+O(p^{\beta(m'+1)}),
       \qquad p\downarrow0.
 \end{split}
\end{equation}
Here the four nonnegative truncation indices can be chosen independently.

Assumption \eqref{mu} also implies that $\widehat\mu$ and $\widehat\nu$ are holomorphic in
\[
 \mathcal H_{p_0}=\{p\in\C:\operatorname{Re}p>-p_0\}.
\]
For completeness, if $K\subset\mathcal H_{p_0}$ is compact, there is $\delta>0$ such that $\operatorname{Re}p+p_0\geq\delta$ on $K$. For each $\ell\geq0$,
\[
 t^\ell |e^{-pt}\mu(t)|
 \leq t^\ell e^{-\delta t}e^{p_0t}|\mu(t)|,
\]
whose right-hand side is integrable, uniformly for $p\in K$. Differentiation under the integral is therefore valid at every order. Hence
\[
 \widehat\mu(p)=\sum_{\ell=0}^{N}c_\ell p^\ell+O(p^{N+1}),
 \qquad
 \widehat\nu(p)=\sum_{\ell=0}^{N'}c'_\ell p^\ell+O(p^{N'+1}),
\]
where
\[
 c_\ell=\frac{(-1)^\ell}{\ell!}\int_0^\infty t^\ell\mu(t)\dd t,
 \qquad
 c'_\ell=\frac{(-1)^\ell}{\ell!}\int_0^\infty t^\ell\nu(t)\dd t.
\]
These coefficients are moments over the entire positive time axis; in particular, $c_0=\widehat\mu(0)$ is not the initial value $\mu(0)$.

Substituting the Taylor expansions into \eqref{overdefcon3} yields
\begin{equation}\label{main}
 \begin{split}
 &\sum_{k=0}^{M}q_kp^{\alpha(k+1)-1}
       -\sum_{k=0}^{M'}d_kp^{\beta(k+1)-1}\\
 &\quad-\sum_{\ell=0}^{N}\sum_{j=0}^{m}
                      c_\ell r_jp^{\alpha j+\ell}
       +\sum_{\ell=0}^{N'}\sum_{j=0}^{m'}
                      c'_\ell r_jp^{\beta j+\ell}
       =O(p^\theta),
 \end{split}
\end{equation}
where
\[
 \theta=\min\{\alpha(M+2)-1,\ \alpha(m+1),\ N+1,
              \ \beta(M'+2)-1,\ \beta(m'+1),\ N'+1\}.
\]
All six truncation indices can be chosen independently.

\medskip
\noindent\textit{First step.}
First, let us explain how the coefficients in \eqref{main} are compared. Powers coming from different orders may coincide, even when both orders are irrational. All terms with the same exponent must therefore be collected before their coefficients are compared.

Fix $R\in\R$. Choose all six truncation indices so large that $\theta>R$. These choices include every term in the full exponent families with exponent at most $R$. There are only finitely many such terms: for example, $\alpha j+\ell\leq R$ with $j,\ell\geq0$ bounds both indices. The remainder in \eqref{main} is $o(p^R)$, and every retained term whose exponent exceeds $R$ is also $o(p^R)$. Thus, after grouping equal powers, we obtain a finite relation
\[
 \sum_{\eta\leq R} C_\eta p^\eta+o(p^R)=0,
\]
where the exponents in the sum are distinct. If the sum is nonempty, its largest exponent is at most $R$, so Lemma~\ref{vanishing_coefficients} applies and gives $C_\eta=0$ for each exponent in the sum. Since $R$ is arbitrary, every grouped coefficient vanishes. This is a finite-order argument at each prescribed exponent; it does not require passage to a limit in the truncation indices for a fixed positive $p$.

For one fixed irrational order $\gamma$, there are, however, no repetitions within its own expansion. Indeed,
\[
 \gamma j+\ell=\gamma j'+\ell'
 \quad\Longrightarrow\quad
 \gamma(j-j')=\ell'-\ell
 \quad\Longrightarrow\quad (j,\ell)=(j',\ell').
\]
The initial-data exponents $\gamma(k+1)-1$ are also pairwise distinct. Finally, an equality
\[
 \gamma(k+1)-1=\gamma j+\ell
\]
would give $\gamma(k+1-j)=\ell+1$. If $k+1-j=0$, this is impossible because $\ell+1>0$. Otherwise it makes $\gamma$ rational, again a contradiction. Therefore a nonzero term from one observation must be matched by a term from the other observation with exactly the same exponent.

\medskip
\noindent\textit{Second step.}
Let
\[
 0<\xi_1<\xi_2<\cdots
\]
be the distinct eigenvalues of $A$, and let $P_n$ be the orthogonal projection onto $\ker(A-\xi_nI)$. For $x\in L^2(\Omega)$, put
\[
 w_n(x)=\ip{P_nx}{\psi},\qquad
 m_k(x)=\sum_{n=1}^{\infty}\frac{w_n(x)}{\xi_n^{k+1}},
       \qquad k\in\Nzero.
\]
Orthogonality gives $\ip{P_nx}{\psi}=\ip{P_nx}{P_n\psi}$. Consequently,
\[
 \sum_{n=1}^{\infty}|w_n(x)|
 \leq\sum_{n=1}^{\infty}\|P_nx\|\|P_n\psi\|
 \leq\|x\|\|\psi\|.
\]
If some $w_n(x)$ is nonzero, let $n_0$ be its smallest nonzero index. Then
\begin{equation}\label{eq:moment-limit}
 \xi_{n_0}^{k+1}m_k(x)
 =w_{n_0}(x)+\sum_{n>n_0}w_n(x)
                   \left(\frac{\xi_{n_0}}{\xi_n}\right)^{k+1}
 \longrightarrow w_{n_0}(x)\ne0.
\end{equation}
The limit follows by dominated convergence: the absolute values are dominated by the summable sequence $|w_n(x)|$, and each ratio is strictly less than one. In particular, for sufficiently large $k$, the left-hand side has absolute value at least $|w_{n_0}(x)|/2$. Thus $m_k(x)\ne0$ for every sufficiently large $k$.

Thus either all $w_n(x)$ vanish, or $m_k(x)\ne0$ for every sufficiently large $k$. In particular,
\begin{equation}\label{eq:moment-injectivity}
 m_k(x)=0\quad\hbox{for all }k\geq0
 \quad\Longrightarrow\quad
 \ip{P_nx}{\psi}=0\quad\hbox{for all }n.
\end{equation}
Grouping the repeated eigenvalues in \eqref{r0} gives
\[
 q_k=(-1)^km_k(a),\qquad
 d_k=(-1)^km_k(b),\qquad
 r_k=(-1)^{k+1}m_k(f).
\]
Since $\ip{f}{\psi}=\sum_nw_n(f)\ne0$, at least one $w_n(f)$ is nonzero. Hence
\begin{equation}\label{r-tail}
 r_j\ne0\qquad\hbox{for every sufficiently large }j.
\end{equation}
The use of distinct eigenvalues here also takes account of possible multiplicities.

\medskip
\noindent\textit{Third step.}
We need the following elementary fact. If $0<\alpha<\beta<1$ and $\beta$ is irrational, then, for a fixed $s\in\R$, there cannot exist integers $K_j\geq0$ and $L_j\geq-1$ such that
\begin{equation}\label{matching}
 \alpha j+s=\beta K_j+L_j
       \qquad\hbox{for every integer }j\geq j_0.
\end{equation}
It is important to emphasize here that $s$ does not depend on $j$, and the equality is assumed to hold for all sufficiently large $j$, with $\alpha$ not necessarily being an irrational number.

Suppose otherwise. Subtracting the identities at consecutive indices gives
\[
 \alpha=\beta(K_{j+1}-K_j)+(L_{j+1}-L_j).
\]
Because $\beta$ is irrational, the map $(K,L)\mapsto\beta K+L$ is injective on $\Z^2$: equality of two such representations would imply $\beta(K-K')=L'-L$, forcing $K=K'$ and $L=L'$. Therefore there are fixed integers $K,L$ such that
\[
 K_{j+1}-K_j=K,\qquad L_{j+1}-L_j=L
       \qquad(j\geq j_0).
\]
It follows that
\[
 K_j=K_{j_0}+(j-j_0)K,\qquad
 L_j=L_{j_0}+(j-j_0)L.
\]
Their lower bounds imply $K\geq0$ and $L\geq0$. Thus $\alpha=\beta K+L$. Since $0<\alpha<1$, we must have $L=0$; since $\alpha>0$, we must have $K\geq1$. This gives $\alpha=\beta K\geq\beta$, a contradiction. The assertion is proved.

\medskip
\noindent\textit{Fourth step.}
Assume first that $\alpha<\beta$. Suppose that $\widehat\mu$ is not identically zero in a neighborhood of $0$. There is then an index $\ell_0\geq0$ with $c_{\ell_0}\ne0$. By \eqref{r-tail}, every sufficiently large $j$ gives the nonzero coefficient
\[
 -c_{\ell_0}r_j
\]
at the exponent $\alpha j+\ell_0$ in the contribution of the $\alpha$-observation. By Step~1, this contribution cannot be cancelled by a different term from the same observation. Its exponent must therefore also occur in the $\beta$-observation.

Every exponent of the latter observation is of the form $\beta K+L$ with integers $K\geq0$ and $L\geq-1$. The initial-data exponents correspond to $K=k+1$, $L=-1$; the source exponents correspond to $K=j$, $L=\ell\geq0$. Consequently, for all sufficiently large $j$,
\[
 \alpha j+\ell_0=\beta K_j+L_j
\]
for suitable integers with those lower bounds. Step~3 excludes this possibility. It follows that $\widehat\mu$ is zero near $0$. Holomorphy in the connected half-plane $\mathcal H_{p_0}$ and the identity theorem imply $\widehat\mu\equiv0$ there. Uniqueness of the Laplace transform gives
\begin{equation}\label{eq:mu-zero}
 \mu=0\qquad\hbox{almost everywhere on }(0,\infty).
\end{equation}
One way to see this last implication directly is to fix $\sigma>0$. The values $\widehat\mu(\sigma+i\tau)$ form the Fourier transform of the $L^1(\R)$ function obtained by extending $e^{-\sigma t}\mu(t)$ by zero to $t<0$. Fourier-transform injectivity gives \eqref{eq:mu-zero}.

Suppose next that $\ip{P_na}{\psi}\ne0$ for at least one $n$. Step~2 then gives $q_k\ne0$ for every sufficiently large $k$. Step~1 requires each exponent $\alpha(k+1)-1$ to occur in the $\beta$-observation. With $j=k+1$, this gives
\[
 \alpha j-1=\beta K_j+L_j
\]
for every sufficiently large $j$, contrary to Step~3 with $s=-1$. Hence
\[
 \ip{P_na}{\psi}=0\qquad\hbox{for every }n.
\]
Together with \eqref{eq:mu-zero}, the eigenfunction representation implies
\[
 \ip{u_{\alpha,\mu,a}(t)}{\psi}=0\qquad(t>0).
\]
This contradicts the nontriviality of the common observation. Thus $\alpha<\beta$ is impossible. Interchanging $(\alpha,\mu,a)$ and $(\beta,\nu,b)$ excludes $\beta<\alpha$, and therefore
\begin{equation}\label{orders}
 \alpha=\beta.
\end{equation}

\medskip
\noindent\textit{Fifth step.}
Set $\gamma=\alpha=\beta$. Within the expansion for this one irrational order, the two kinds of exponents are disjoint, and the source exponent $\gamma j+\ell$ uniquely specifies $(j,\ell)$. The grouped coefficient rule applied to \eqref{main} therefore gives
\begin{equation}\label{coefficients}
 q_k-d_k=0\quad(k\geq0),\qquad
 (c_\ell-c'_\ell)r_j=0\quad(j,\ell\geq0).
\end{equation}
Fix any $j$ sufficiently large that $r_j\ne0$. The second equality gives $c_\ell=c'_\ell$ for every $\ell$, so the Taylor series of $\widehat\mu$ and $\widehat\nu$ at zero coincide. These transforms agree near zero, hence throughout $\mathcal H_{p_0}$ by the identity theorem. Their injectivity yields $\mu=\nu$ almost everywhere.

The first equality in \eqref{coefficients} says $m_k(a-b)=0$ for every $k$. Applying \eqref{eq:moment-injectivity} to $x=a-b$ gives
\[
 \ip{P_n(a-b)}{\psi}=0\qquad\hbox{for every }n.
\]
The scalar series is absolutely convergent, and summing gives $\ip{a-b}{\psi}=0$. The proof of Theorem~\ref{thm1} is complete.
\end{proof}

\begin{rmk}[The zero-observation case]\label{rem:zero-observation}
Under the source and observation assumptions of Theorem~\ref{thm1}, for each fixed irrational $\alpha$,
\begin{equation}\label{eq:zero-characterization}
 \ip{u_{\alpha,\mu,a}(t)}{\psi}\equiv0
 \quad\Longleftrightarrow\quad
 \mu=0\ \hbox{a.e.},\qquad
 \ip{P_na}{\psi}=0\ \hbox{for every }n.
\end{equation}
To prove the forward implication, expand the Laplace transform of the zero observation as in the proof. Step~1 now applies to the expansion of a single observation, whose exponents are distinct. It yields $q_k=0$ and $c_\ell r_j=0$ for every index. Step~2 gives $r_j\ne0$ for some $j$, hence all $c_\ell$ vanish, which implies $\mu=0$ almost everywhere. The same step applied to the zero moments of $a$ gives $\ip{P_na}{\psi}=0$ for all $n$. The reverse implication is immediate from the spectral formula.

Thus $y\not\equiv0$ is equivalent to $\mu\not\equiv0$ in the almost-everywhere sense or to the existence of a nonzero weight $\ip{P_na}{\psi}$. In particular, a nonzero source factor is sufficient; it need not have a nonzero initial value or a nonzero integral. Without $y\not\equiv0$, equality of two observations still implies equality of the source factors and of the observable initial weights. Unequal irrational orders can occur only in the zero-observation case, when both source factors vanish and both sets of initial weights vanish.

For example, take two orthonormal eigenvectors $\varphi_1,\varphi_2$, set $f=\psi=\varphi_1$, $a=b=\varphi_2$, and $\mu=\nu=0$. Both observations vanish for any two orders, although $\ip{f}{\psi}=1$. This shows why a nontriviality requirement is necessary for order identification.
\end{rmk}

\begin{rmk}[The recoverable part of the initial value]\label{rem:observable-initial}
The theorem determines the initial value modulo
\[
 \mathcal N_\psi=
 \{x\in L^2(\Omega):\ip{P_nx}{\psi}=0\ \hbox{for every }n\}
 =\left(\overline{\operatorname{span}\{P_n\psi:n\in\N\}}\right)^\perp.
\]
In general, one scalar weight on each eigenspace does not determine the entire projection onto that eigenspace. However, if all eigenvalues are simple and $(\varphi_n,\psi)\ne0$ for every $n$, then $\mathcal N_\psi=\{0\}$, and the theorem gives $a=b$. Therefore one should distinguish the general spectral-weight conclusion from this additional observability condition.
\end{rmk}

\begin{exa}[Nonuniqueness at the classical order]
\label{ex:classical}
Let $X=L^2(0,1)$ and let
\[
 A=-\frac{d^2}{dx^2},
 \qquad
 \mathcal D(A)=H^2(0,1)\cap H_0^1(0,1).
\]
Throughout this example, $(\cdot,\cdot)$ and $\|\cdot\|$ denote
the scalar product and norm in $L^2(0,1)$.
Consider the initial-boundary value problem
\[
 \begin{cases}
  u_t+Au=\mu(t)f(x),&0<x<1,\quad t>0,\\[1mm]
  u(0,t)=u(1,t)=0,&t>0,\\[1mm]
  \displaystyle\lim_{t\downarrow0}
      \|u(\cdot,t)-a\|=0,
 \end{cases}
\]
where $f\in L^2(0,1)$ is prescribed and the initial value
$a\in H_0^1(0,1)$ is unknown.

For the difference of two solutions corresponding to the same
order, equality of their observations becomes a zero observation,
while the initial value of the difference need not vanish.
For a prescribed weight $\psi\in L^2(0,1)$, we therefore ask whether
\[
 (u(\cdot,t),\psi)=0\quad\text{for all }t>0
\]
implies $\mu=0$, provided that $(f,\psi)\neq0$.
At the classical order $\alpha=1$, the answer is negative.

Indeed, set
\[
 g(x)=x(1-x),\qquad
 \psi(x)=1-5x(1-x),
\]
and choose
\[
 a(x)=g(x),\qquad
 f(x)=-g(x)-g''(x)=2-x(1-x),\qquad
 \mu(t)=e^{-t}.
\]
Then
\[
 u(x,t)=e^{-t}g(x)
\]
is a solution of the above problem. In fact, $g\in\mathcal D(A)$,
$g(0)=g(1)=0$, and
\[
 \begin{aligned}
 u_t+Au&=e^{-t}(-g-g'')=\mu(t)f(x),\\
 \|u(\cdot,t)-a\|&=|e^{-t}-1|\,\|g\|\longrightarrow0
 \end{aligned}
\]
as $t\downarrow0$.
Moreover,
\[
 \begin{aligned}
 (g,\psi)
 &=\int_0^1x(1-x)\bigl(1-5x(1-x)\bigr)\,dx
   =\frac16-\frac5{30}=0,\\
 \int_0^1\psi(x)\,dx&=1-\frac56=\frac16.
 \end{aligned}
\]
Consequently,
\[
 \begin{aligned}
 (u(\cdot,t),\psi)&=e^{-t}(g,\psi)=0,\qquad t>0,\\
 (f,\psi)&=2\int_0^1\psi(x)\,dx-(g,\psi)=\frac13\neq0.
 \end{aligned}
\]
Nevertheless, $\mu(t)=e^{-t}\not\equiv0$.
This source factor belongs to $L^\infty(0,\infty)$ and satisfies
condition~\eqref{mu}, for example with $p_0=1/2$, since
\[
 \int_0^\infty e^{t/2}|\mu(t)|\,dt
 =\int_0^\infty e^{-t/2}\,dt=2<\infty.
\]
Thus a zero observation does not force the temporal source factor
to vanish when the initial value is unknown.

The same example also gives nonuniqueness with a nonzero common
observation. To see this, put
\[
 z(x,t)=e^{-\pi^2t}\sin(\pi x),\qquad
 U=z+2u,\qquad V=z+u.
\]
Since $z_t+Az=0$, the functions $U$ and $V$ satisfy
\[
 U_t+AU=2e^{-t}f,\qquad V_t+AV=e^{-t}f
\]
with the same homogeneous Dirichlet boundary conditions and the
respective initial values
\[
 a_1(x)=\sin(\pi x)+2g(x),\qquad
 a_2(x)=\sin(\pi x)+g(x).
\]
Both initial values are nonzero and belong to $\mathcal D(A)$.
Since $U-V=u$ and $(u(\cdot,t),\psi)=0$, their observations coincide:
\[
 \begin{aligned}
 (U(\cdot,t),\psi)=(V(\cdot,t),\psi)
 &=e^{-\pi^2t}\int_0^1\sin(\pi x)\psi(x)\,dx\\
 &=\frac{2(\pi^2-10)}{\pi^3}e^{-\pi^2t}\neq0,
 \qquad t>0.
 \end{aligned}
\]
However, their temporal source factors $2e^{-t}$ and $e^{-t}$ are
different. Both are positive, bounded, and satisfy
\eqref{mu} with $p_0=1/2$.
Therefore, the simultaneous uniqueness conclusion of
Theorem~\ref{thm1} cannot, in general, be extended to
$\alpha=\beta=1$, even when the common observation and both
source factors are nonzero.
\end{exa}
\section{Inverse problem of determining the fractional order}\label{sec:order}

\subsection{The small-Laplace-parameter argument}

\begin{proof}[Proof of Theorem~\ref{thm2}]
Let $\{\lambda_n,\varphi_n\}$ and $\{\rho_n,\chi_n\}$ denote the
eigenvalues and orthonormal eigenfunctions of $A$ and $B$,
respectively. The eigenvalues are repeated according to multiplicity,
and $\lambda_n\geq\lambda_1>0$, $\rho_n\geq\rho_1>0$.
All scalar products and norms below are in $L^2(\Omega)$.

In terms of these eigenfunctions, condition~\eqref{Aa} reads
\[
 Q_A:=\sum_{n=1}^{\infty}
       \frac{(a,\varphi_n)(\varphi_n,\psi)}{\lambda_n}\neq0,
 \qquad
 Q_B:=\sum_{n=1}^{\infty}
       \frac{(b,\chi_n)(\chi_n,\psi)}{\rho_n}\neq0.
\]
These series converge absolutely. Indeed, by the Cauchy--Schwarz
inequality and Parseval's identity,
\[
 \sum_{n=1}^{\infty}
    |(h,\varphi_n)(\varphi_n,\psi)|
 \leq\|h\|\,\|\psi\|,\qquad h\in L^2(\Omega),
\]
and the same estimate holds for the basis $\{\chi_n\}$.

Denote the common observation by
\[
 y(t)=(u_{A,a,\mu,f,\alpha}(\cdot,t),\psi)
     =(u_{B,b,\nu,g,\beta}(\cdot,t),\psi),\qquad t>0.
\]
As in the derivation of (3.1), the eigenfunction representation
and Lemma~\ref{lem:laplace-ML} give, for every $p>0$,
\[
 \begin{split}
 p^{1-\alpha}\widehat y(p)
 ={}&\sum_{n=1}^{\infty}
       \frac{(a,\varphi_n)(\varphi_n,\psi)}{p^\alpha+\lambda_n}\\
 &+p^{1-\alpha}\widehat\mu(p)
       \sum_{n=1}^{\infty}
       \frac{(f,\varphi_n)(\varphi_n,\psi)}{p^\alpha+\lambda_n}.
 \end{split}
\]
We now let $p\downarrow0$. The first series tends to $Q_A$
by dominated convergence, since
\[
 \frac{|(a,\varphi_n)(\varphi_n,\psi)|}{p^\alpha+\lambda_n}
 \leq\frac{|(a,\varphi_n)(\varphi_n,\psi)|}{\lambda_n},
\]
and the majorants are summable.

Condition~\eqref{mu} implies $\mu,\nu\in L^1(0,\infty)$.
In particular, $|\widehat\mu(p)|\leq\|\mu\|_{L^1(0,\infty)}$
for $p>0$. Hence the absolute value of the second term is at most
\[
 p^{1-\alpha}\|\mu\|_{L^1(0,\infty)}
       \frac{\|f\|\,\|\psi\|}{\lambda_1}
 \longrightarrow0,
\]
because $\alpha<1$. Applying the same argument to the second
solution, with the eigenfunctions $\{\chi_n\}$, we obtain
\begin{equation}\label{eq:leading-laplace}
 \lim_{p\downarrow0}p^{1-\alpha}\widehat y(p)=Q_A,
 \qquad
 \lim_{p\downarrow0}p^{1-\beta}\widehat y(p)=Q_B.
\end{equation}

Suppose that $\alpha<\beta$. Then
\[
 p^{1-\alpha}\widehat y(p)
 =p^{\beta-\alpha}
       \bigl(p^{1-\beta}\widehat y(p)\bigr)
 \longrightarrow0,
\]
since the expression in parentheses tends to the finite number
$Q_B$. This contradicts the first limit in
\eqref{eq:leading-laplace} and $Q_A\neq0$.
Interchanging the two solutions excludes $\beta<\alpha$.
Therefore $\alpha=\beta$.

For this common order, the two expressions on the left in
\eqref{eq:leading-laplace} coincide. Their limits are therefore
equal, so $Q_A=Q_B$. This is precisely the remaining conclusion
of the theorem.
\end{proof}

\begin{rmk}
The proof of Theorem~\ref{thm2} only needs the source transforms to remain bounded as $p\downarrow0$. Under the other standing hypotheses, $\mu,\nu\in L^1(0,\infty)$ already suffice for this argument. The stronger common assumption \eqref{mu} is retained in its statement for consistency with the main theorem.
\end{rmk}

\subsection{The short-time argument}

\begin{proof}[Proof of Theorem~\ref{thm3}]
We first find the short-time expansion of the observation for the equation with $A$. Write
\[
 u(t)=S_\alpha(t)a+V_\alpha(t),
\]
where these are the homogeneous and inhomogeneous parts of \eqref{eq:solution-series}. The exact Mittag--Leffler identity
\[
 E_{\alpha,1}(z)=1+\frac{z}{\Gamma(1+\alpha)}
                         +z^2E_{\alpha,1+2\alpha}(z)
\]
implies
\begin{equation}\label{eq:initial-asymptotic}
 \ip{S_\alpha(t)a}{\psi}
 =\ip{a}{\psi}
    -\frac{t^\alpha}{\Gamma(1+\alpha)}\ip{a}{A\psi}
    +R_a(t).
\end{equation}
Indeed,
\[
 R_a(t)=t^{2\alpha}\sum_{n=1}^{\infty}
 \lambda_n^2 E_{\alpha,1+2\alpha}(-\lambda_nt^\alpha)
              (a,\varphi_n)(\varphi_n,\psi).
\]
By \eqref{eq:ML-bound}, self-adjointness, and the Cauchy--Schwarz inequality,
\begin{equation}\label{eq:initial-remainder}
 \begin{split}
 |R_a(t)|
 &\leq C_\alpha t^{2\alpha}
       \sum_{n=1}^{\infty}|(a,\varphi_n)|
                            |(\varphi_n,A^2\psi)|\\
 &\leq C_\alpha t^{2\alpha}\|a\|\|A^2\psi\|.
 \end{split}
\end{equation}
This bound also justifies the rearrangement of the scalar spectral series.

For the source part, use the exact identity
\[
 E_{\alpha,\alpha}(z)=\frac1{\Gamma(\alpha)}
                            +zE_{\alpha,2\alpha}(z).
\]
It gives
\begin{equation}\label{eq:source-asymptotic}
 \ip{V_\alpha(t)}{\psi}
       =(J^\alpha\mu)(t)\ip{f}{\psi}-R_f(t),
\end{equation}
where
\[
 R_f(t)=\int_0^t(t-s)^{2\alpha-1}\mu(s)
       \sum_{n=1}^{\infty}\lambda_n
       E_{\alpha,2\alpha}(-\lambda_n(t-s)^\alpha)
       (f,\varphi_n)(\varphi_n,\psi)\dd s.
\]
For $0<t\leq T_0<T$, the bound on $\mu$ and Proposition~\ref{prop1}, applied with $A\psi$, imply
\begin{equation}\label{eq:source-remainder}
 \begin{split}
 |R_f(t)|
 &\leq C_\alpha\|\mu\|_{L^\infty(0,T_0)}
       \|f\|\|A\psi\|\int_0^t(t-s)^{2\alpha-1}\dd s\\
 &\leq \frac{C_\alpha}{2\alpha}
       \|\mu\|_{L^\infty(0,T_0)}
       \|f\|\|A\psi\| t^{2\alpha}.
 \end{split}
\end{equation}
In particular, the absolute majorant is integrable, so the interchange of the series and the time integral is justified.

Choose $T_0>0$ sufficiently small that $|\mu(s)-\mu_0|\leq C_0s$ for $0<s<T_0$. Then
\[
 \left|(J^\alpha\mu)(t)-\frac{\mu_0t^\alpha}{\Gamma(1+\alpha)}\right|
 \leq\frac{C_0}{\Gamma(\alpha)}
          \int_0^t(t-s)^{\alpha-1}s\dd s
 =\frac{C_0t^{\alpha+1}}{\Gamma(\alpha+2)}.
\]
Combining this estimate with \eqref{eq:initial-asymptotic}--\eqref{eq:source-remainder}, and using $\alpha+1>2\alpha$, yields
\begin{equation}\label{eq:yA-short}
 y_A(t)=\ip{a}{\psi}
       +\frac{c_A t^\alpha}{\Gamma(1+\alpha)}+O(t^{2\alpha}),
       \qquad t\downarrow0.
\end{equation}
Exactly the same argument for $B$ gives
\begin{equation}\label{eq:yB-short}
 y_B(t)=\ip{b}{\psi}
       +\frac{c_B t^\beta}{\Gamma(1+\beta)}+O(t^{2\beta}).
\end{equation}

Taking $t\downarrow0$ in $y_A(t)=y_B(t)$ first proves $\ip{a}{\psi}=\ip{b}{\psi}$. After subtracting these constants, suppose that $\alpha<\beta$ and divide by $t^\alpha$. Equations \eqref{eq:yA-short} and \eqref{eq:yB-short} give
\[
 \frac{c_A}{\Gamma(1+\alpha)}+O(t^\alpha)
 =\frac{c_B t^{\beta-\alpha}}{\Gamma(1+\beta)}
       +O(t^{2\beta-\alpha}).
\]
The right-hand side tends to zero and the left-hand side tends to a nonzero number, a contradiction. The reverse inequality is excluded in the same way. Hence $\alpha=\beta$. Dividing by the common power and taking the limit gives $c_A=c_B$. The proof of Theorem~\ref{thm3} is complete.
\end{proof}

\begin{rmk}[A sufficient condition for \eqref{Aa}]\label{rem:elliptic-positivity}
For the elliptic Dirichlet operators considered in this paper, condition \eqref{Aa} is satisfied under a simple sign assumption. Namely, suppose that
\[
 a\geq0,\quad a\not\equiv0,\qquad
 b\geq0,\quad b\not\equiv0,\qquad
 \psi\geq0,\quad\psi\not\equiv0
 \quad\hbox{in }\Omega.
\]
The functions $w_A=A^{-1}a$ and $w_B=B^{-1}b$ solve the elliptic Dirichlet problems
\[
 Aw_A=a,\qquad Bw_B=b\quad\hbox{in }\Omega,
 \qquad w_A=w_B=0\quad\hbox{on }\partial\Omega.
\]
The positivity-improving property of the Dirichlet resolvent on a connected domain, which follows from the maximum principle in its weak-solution form, gives
\[
 w_A>0,\qquad w_B>0\quad\hbox{almost everywhere in }\Omega.
\]
Consequently,
\[
 (A^{-1}a,\psi)=\int_\Omega w_A(x)\psi(x)\dd x>0,
 \qquad
 (B^{-1}b,\psi)=\int_\Omega w_B(x)\psi(x)\dd x>0.
\]
Thus Theorem~\ref{thm2} applies without a separate verification of \eqref{Aa}. The same nonvanishing conclusion holds if each of $a$, $b$, and $\psi$ is nonzero and has a fixed sign, since their signs can be changed separately. For related maximum-principle arguments in fractional-order identification, see \cite{JK,Yamamoto-arxiv}.
\end{rmk}

\section{Conclusion}\label{sec:conclusion}

We have considered two inverse problems for a time-fractional diffusion equation with an elliptic Dirichlet operator. The main theorem shows that a single nontrivial weighted observation for all positive times determines an irrational fractional order and an exponentially integrable bounded temporal source factor, even when the initial value is unknown. The same observation determines the weighted projections of the initial value onto all eigenspaces. No sign condition or nonvanishing first inverse moment of the initial value is required for this theorem.

For the determination of the fractional order alone, we have obtained two results allowing different elliptic operators, initial values, and source terms. The first uses the leading behavior of the Laplace transform near zero. The second uses the leading fractional term of the observation near $t=0$ and requires data only on a finite time interval. Both results apply to all orders in $(0,1)$ under their respective nonvanishing conditions.

The counterexample at $\alpha=1$ shows that the simultaneous uniqueness statement cannot be extended to the classical equation under the same assumptions. The present paper concerns uniqueness; stability and reconstruction are not addressed. The simultaneous determination problem for rational orders also requires further study.

\end{document}